\documentclass[a4paper,12pt]{amsart}

\usepackage{enumerate,layout,amssymb,epsf,mathrsfs}%,pxfonts}
\usepackage{eepic,graphicx,amscd}
\usepackage{wrapfig}

\newtheorem{theorem}{Theorem}[section]

\newtheorem{proposition}[theorem]{Proposition}

\newtheorem{lemma}[theorem]{Lemma}

\theoremstyle{definition}

\newtheorem{definition}[theorem]{Definition}

\newcommand{\Z}{{\mathbb Z}}
\newcommand{\N}{{\mathbb N}}

\newcommand{\R}{{\mathbb R}}
\newcommand{\B}{{\mathcal B}}
\newcommand{\C}{{\mathcal C}}

\renewcommand{\L}{{\mathcal L}}

\DeclareMathOperator{\Orb}{Orb}

\DeclareMathOperator{\id}{id}

\setlength{\textwidth}{420pt}
\setlength{\oddsidemargin}{12pt}
\setlength{\textheight}{680pt}
\setlength{\voffset}{-30pt}
\setlength{\evensidemargin}{12pt}

\numberwithin{equation}{section}

\begin{document}

\title[uniform sets for transformations]{Uniform sets for 
infinite measure-preserving systems}

\author[H. Yuasa]{Hisatoshi Yuasa}
\address{17-23-203 Idanakano-cho, Nakahara-ku, Kawasaki Kanagawa 211-0034, JAPAN.}
%\dedicatory{Dedicated to Kiyoshiro Imawano}

\email{hisatoshi.yuasa@gmail.com}

\begin{abstract}
The concept of a uniform set is introduced for an ergodic, measure-preserving 
transformation on  a 
non-atomic, infinite Lebesgue space. The uniform sets exist as much 
as they generate the underlying $\sigma$\nobreakdash-algebra. This leads to the 
result that any ergodic, measure-preserving transformation on a non-atomic, 
infinite Lebesgue space is isomorphic to a minimal homeomorphism on a locally 
compact metric space which admits a unique, up to scaling, invariant Radon measure.
\end{abstract}

\keywords{infinite measure-preserving system, ergodicity, 
Kakutani-Rohlin tower, topological model, almost minimality, 
locally compact Cantor minimal system, invariant Radon measure}

\subjclass[2000]{ Primary 37A05, 37B05; Secondary }

\maketitle

\section{Introduction}\label{intro}

The present work concerns constructing a topological model of a given ergodic 
measure-preserving system. R.~I.~Jewett \cite{Jewett} showed that 
any weakly mixing 
measure\nobreakdash-preserving transformation on a non\nobreakdash-atomic, 
Lebesgue probability space is (measure\nobreakdash-theoretically) isomorphic 
to a strictly ergodic homeomorphism on a Cantor set, which was extended by 
W.~Krieger~\cite{Krieger} to all ergodic systems. This 
model theorem is the so\nobreakdash-called Jewett-Krieger Theorem. 
G.~Hansel and J.~P.~Raoult \cite{HR} emphasized that uniform partitions 
play important roles in proving Jewett-Krieger Theorem. 
B.~Weiss~\cite{W1,W2} added a categorical taste to the model theorem: if 
$\pi:{\mathbf Y}_1 \to {\mathbf Y}_2$ is a factor map between ergodic systems 
and if ${\mathbf X}_2$ is a strictly ergodic 
model of ${\mathbf Y}_2$, then there exists a strictly ergodic model 
${\mathbf X}_1$ of ${\mathbf Y}_1$ such that the diagram
\[
\begin{CD}
{\mathbf Y}_1 @> \phi >> {\mathbf X}_1\\
@V \pi VV @VV \rho V \\
{\mathbf Y}_2 @>> \psi > {\mathbf X}_2
\end{CD}
\]
commutes, where $\phi$ and $\psi$ are isomorphisms and $\rho$ is a 
topological factor map. In connection with topological orbit equivalence 
\cite{GPS}, N.~Ormes~\cite{Ormes} showed  a generalization of 
Jewett-Krieger Theorem that any ergodic system has a topological model which is 
orbit equivalent to a given Cantor minimal system with a given invariant 
probability measure. Along this line, H.~Matui~\cite{M} achieved a model 
theorem which realizes an ergodic system as a minimal homeomorphisms on a locally 
compact metric space. I.~Kornfeld  and N.~Ormes \cite{KO} generalized 
Ormes' model theorem for families of ergodic systems. 
Strictly ergodic models for ergodic actions by groups other than $\Z$ were 
obtained for $\R$\nobreakdash-action by K.~Jacobs~\cite{Ja}, 
M.~Denker and E.~Eberlein~\cite{DE}, and for free actions by the 
commutative groups by B.~Weiss~\cite{W1}. 

All of the above\nobreakdash-mentioned model theorems target the ergodic 
transformations preserving probability measures. It is the present work that 
initiates a model theorem of ergodic, infinite 
measure\nobreakdash-preserving systems. It is actually proved in 
Theorem~\ref{J-K} that an ergodic, infinite measure\nobreakdash-preserving 
system has a topological model of a minimal homeomorphism on a locally compact 
metric space admitting a unique, up to scaling, invariant Radon measure. 
Since our strategy for proving the theorem is to follow the line of 
B.~Weiss~\cite{W1,W2} (see also \cite{G}), the concept of a uniform set should 
be formalised suitably for the infinite measure\nobreakdash-preserving case. 
This is accomplished by Definition~\ref{uniform_infi}. 
Lemmas~\ref{initial_partition} and \ref{refining_uniform} are crucial to prove 
Theorem~\ref{J-K}. It follows from the lemmas that the ergodic system can be 
approximated by a refining sequence of uniform partitions which generate the 
underlying $\sigma$\nobreakdash-algebra. A uniform partition is a finite 
partition with a unique atom of infinite measure whose atoms of finite measure 
are all uniform. The uniform 
partition has an advantage to give rise to an almost minimal factor admitting 
a unique, up to scaling, invariant Radon measure. This fact is 
verified in virtue of Proposition~\ref{existence}. The proposition 
characterizes a homeomorphism on a locally compact metric space admitting a 
unique, up to scaling, Radon measure. As an immediate consequence of 
Theorem~\ref{J-K}, 
it holds that an ergodic, infinite measure\nobreakdash-preserving system is 
isomorphic to the Vershik map arising from an almost simple, ordered Bratteli 
diagram in the sense of \cite{D}. Unfortunately, a categorical realization of 
a factor map between infinite measure\nobreakdash-preserving systems has not 
been achieved yet. This problem is the infinite measure counterpart of the 
categorical model theorem of B.~Weiss~\cite{W1,W2}.

The author thanks Professor T.~Hamachi for letting him know the existence of 
an extension \cite{Okabe} of Kolmogorov consistency theorem to infinite 
measure spaces.

\section{Ratio ergodic theorem on towers}

In this section, we briefly review some basic concepts and facts concerning 
transformations on measure spaces which preserve measures; in particular, 
finite partitions, symbolic factors associated with them and 
Kakutani\nobreakdash-Rohlin partitions. Without explicitly stating, we 
assume any relations among measurable subsets of a measure space, or any 
properties of maps between measure spaces and so on are taken to hold up to 
sets of measure zero.

If a measure space $(Y,\C,\nu)$ is isomorphic 
to the measure space of real numbers equipped with the 
$\sigma$\nobreakdash-algebra of Lebesgue measurable subsets and Lebesgue 
measure, then we call $(Y,\C,\nu)$ a {\em non\nobreakdash-atomic, infinite 
Lebesgue space}. If $X$ is a complete separable metric 
space and $\mu$ is a non\nobreakdash-atomic, infinite, 
$\sigma$\nobreakdash-finite measure on the Borel 
$\sigma$\nobreakdash-algebra $\B$ of $X$, then $(X,\B_\mu,\mu)$ is 
isomorphic to a non\nobreakdash-atomic, infinite Lebesgue space, where 
$\B_\mu$ is the completion of $\B$ under $\mu$. See for details 
\cite[Chapter~1]{Aa}.

Let $(Y,\C,\nu)$ be a non\nobreakdash-atomic, infinite Lebesgue space. A 
bijection $T:Y \to Y$ is said to be {\em bi\nobreakdash-measurable} if both of 
$T$ and $T^{-1}$ are measurable. Suppose a bi\nobreakdash-measurable 
bijection $T:Y \to Y$ {\em preserves the measure} $\nu$, i.e.\ 
$\nu \circ T^{-1}(E):=\nu(T^{-1}E) = \nu(E)$ for all $E \in \C$. The measure 
$\nu$ is also said to be {\em $T$\nobreakdash-invariant}. We then call 
$(Y,\C,\nu,T)$ an {\em infinite measure\nobreakdash-preserving system}. 
If $\nu(B)=0$ or $\nu(Y \setminus B) = 0$ for any $T$\nobreakdash-invariant set 
$B \in \C$, then $T$ is said to be {\em ergodic}. 
The ergodicity implies that $T$ is {\em aperiodic}, i.e.\ 
the {\em orbit} $\Orb_T(y):=\{T^ny|n \in \Z\}$ of any point $y \in Y$ is 
infinite. We refer to a set $\{T^iy|m \le i \le n\}$ with $m \le n$ as a 
{\em section} of $\Orb_T(y)$. 
Symbolic examples over finite states of ergodic, infinite 
measure\nobreakdash-preserving systems are recently obtained by 
\cite{BKM,BKMS,HY,Y4}. 
Throughout the remainder of this paper, we assume $(Y,\C,\nu,T)$ is an 
ergodic, infinite measure\nobreakdash-preserving system. 
Suppose $(Y^\prime,{\mathcal C}^\prime,\nu^\prime,T^\prime)$ is an infinite 
measure\nobreakdash-preserving system. If there exists a measurable surjection 
$\phi:Y \to Y^\prime$ such that $\nu^\prime = \nu \circ \phi^{-1}$ and 
$\phi \circ T = T^\prime \circ \phi$, then $\phi$ and $T^\prime$ are called a 
{\em factor map} and a {\em factor} of $T$, respectively. 
Then, $T^\prime$ is necessarily ergodic. If in addition $\phi$ is injective, 
then $(Y^\prime,\C^\prime,\nu^\prime,T^\prime)$ is said to be {\em isomorphic} to 
$(Y,\C,\nu,T)$.

An element of a partition of $Y$ into measurable subsets is called an 
{\em atom}. A partition $\beta$ is said to {\em refine} a partition 
$\alpha$ if each atom of $\alpha$ is a union of atoms of $\beta$. If $\alpha$ 
is written as $\{A_1,A_2,\dots,A_N\}$ with $N \ge 2$ and if $\alpha$ has a 
unique atom of infinite measure, then we call $\alpha$ a {\em finite partition}. 
We always assume the unique atom has index $1$, i.e.\ $\nu(A_1)=\infty$. 
Let $K_{\alpha}$ denote the set $Y \setminus A_1$. 
For another finite partition $\beta$, we define the {\em join} 
$\alpha \vee \beta$ to be a finite partition 
$\{A \cap B|A \in \alpha, B \in \beta\}$. This definition and notation may be 
extended naturally to the join of a finite number of finite partitions. A finite 
partition $\bigvee_{i=m}^nT^{-i}\alpha$ with $m \le n$ is denoted by 
$\alpha_m^n$. If $\beta$ is written as $\{B_1,B_2,\dots,B_N\}$, then we set 
$d(\alpha,\beta)=\sum_{i \ne 1}\nu(A_i \triangle B_i)$. 
If a sequence $\{\alpha_n\}_{n \in \N}$ of finite partitions is a Cauchy sequence 
in $d$, where $\sharp \alpha_n$ is assumed constant, then there exists a 
finite partition $\alpha_0$ such that 
$\lim_{n \to \infty}d(\alpha_n,\alpha_0)=0$.

Regard the index set ${\mathfrak A}_\alpha=\{1,2,\dots,N\}$ of the finite 
partition $\alpha$ as a finite alphabet. Define a measurable map 
$\phi_\alpha:Y \to {\mathfrak A}_\alpha^\Z$ so that 
$T^iy \in A_{\phi_\alpha(y)_i}$ for every $i \in \Z$. 
An infinite, $\sigma$\nobreakdash-finite, Borel measure 
$\hat{\mu}_\alpha:=\nu \circ {\phi_\alpha}^{-1}$ is invariant under the left 
shift on ${\mathfrak A}_\alpha^\Z$. The support $\hat{X}_\alpha$ of 
$\hat{\mu}_\alpha$, i.e.\ the smallest closed subset of full measure, is 
a shift\nobreakdash-invariant Cantor set. 
The map $\phi_\alpha$ works as a factor map from $(Y,\nu,T)$ to an ergodic, 
infinite measure\nobreakdash-preserving system $(\hat{X}_\alpha,\hat{\mu}_\alpha,
\hat{S}_\alpha)$, where $\hat{S}_\alpha$ is the restriction of the left shift 
to $\hat{X}_\alpha$. Set 
\[
{\mathcal L}(\alpha) = \bigcup_{n=0}^\infty \{ w:=w_1w_2 \dots w_n \in 
{\mathfrak A}_\alpha^n| \nu(\bigcap_{i=1}^nT^{-(i-1)}A_{w_i}) > 0 \}.
\]
It follows from the definition 
of $\hat{X}_\alpha$ that for any $x=(x_i)_i \in {\mathfrak A}_\alpha^\Z$, 
$x \in \hat{X}_\alpha$ if and only if $x_{[-n,n)}:=x_{-n}x_{-n+1} \dots x_{n-1} 
\in {\mathcal L}(\alpha)$ for all $n \in \N$. Hence, an element $1^\infty$ of 
${\mathfrak A}_\alpha^\Z$ all of whose coordinates are $1$ belongs to 
$\hat{X}_\alpha$. Since $\phi_\alpha^{-1}(1^\infty)= \bigcap_{i \in \Z}T^{-i}A_1$ 
is $T$\nobreakdash-invariant, we obtain $\hat{\mu}_\alpha(\{1^\infty\})=0$. 
Consequently, the map $\phi_\alpha$ also works as a factor map from $(Y,\nu,T)$ 
to the restriction $(X_\alpha,\mu_\alpha,S_\alpha)$ of 
$(\hat{X}_\alpha,\hat{\mu}_\alpha,\hat{S}_\alpha)$ to a locally compact subshift 
$X_\alpha:=\hat{X}_\alpha \setminus \{1^\infty\}$. With words $u,v$, we 
associate a {\em cylinder set}: 
\[
[u.v]=\{x=(x_i)_i \in \hat{X}_\alpha| x_{[-|u|,|v|)}=uv\}, 
\]
where $|u|$ is the length of $u$. 
If $u$ is the empty word, then $[u.v]$ is abbreviated to $[v]$. The family of 
cylinder sets generate the topology of $\hat{X}_\alpha$. Given words $u$ and $v$ 
over ${\mathfrak A}_\alpha$, $\hat{\mu}_\alpha([u.v])>0$ if and only if $uv \in 
{\mathcal L}(\sigma)$, and hence, any nonempty open subset of $\hat{X}_\alpha$ 
has a strictly positive measure. Moreover, given words $u$ and $v$ satisfying 
$uv \in \L(\sigma)$, $\hat{\mu}_\alpha([u.v]) < \infty$ if and only if 
$(uv)_i \ne 1$ for some integer $i$ with $1 \le i \le |uv|$.

Let $\alpha$ and $\beta$ be finite partitions of $Y$. Suppose 
$\alpha$ is finer than $\beta$. Define a factor map 
$\phi_{\beta,\alpha}$ from $(\hat{X}_\alpha,\hat{S}_\alpha)$ to 
$(\hat{X}_\beta,\hat{S}_\beta)$ so that an atom of $\beta$ having index 
$\phi_{\beta,\alpha}(x)_i$ includes an atom of $\alpha$ having index $x_i$ for 
any $x \in \hat{X}_\alpha$ and any $i \in \Z$. Since 
$\phi_{\beta,\alpha} \circ \phi_\alpha = \phi_\beta$, we have 
$\hat{\mu}_\beta = \hat{\mu}_\alpha \circ \phi_{\beta,\alpha}^{-1}$, 
so that $(\hat{X}_\beta,\hat{\mu}_\beta,\hat{S}_\beta)$ is a factor of 
$(\hat{X}_\alpha,\hat{\mu}_\alpha,\hat{S}_\alpha)$. 
If $\gamma$ is another finite partition than which $\beta$ is finer, then it 
holds that 
$\phi_{\gamma,\alpha} = \phi_{\gamma,\beta} \circ \phi_{\beta,\alpha}$.

If $B,TB,\dots,T^{N-1}B$ are disjoint, then the family 
${\mathfrak c}:=\{B,TB,\dots,T^{N-1}B\}$ is called a {\em column} of 
{\em height} $N$ with {\em base} $B$. Each set 
$T^jB$ ($0 \le j < N$) is called a {\em level} of ${\mathfrak c}$. 
A {\em subcolumn} of ${\mathfrak c}$ is a column of the form 
$\{C,TC,\dots,T^{N-1}C\}$ with a measurable subset $C$ of $B$. A countable 
partition: 
\begin{equation}\label{FP}
{\mathfrak t}:=\{T^jB_i|0 \le j < N_i, i \in \N\}
\end{equation}
of $Y$ is called a {\em tower} with {\em base} 
$B({\mathfrak t}):=\bigcup_iB_i$. A {\em fiber} of ${\mathfrak t}$ is 
a set $\{T^jy|0 \le j < N_i\}$ with $y \in B_i$ and $i \in \N$, so that every 
fiber is a section of an orbit. The tower 
${\mathfrak t}$ is said to {\em refine} a tower ${\mathfrak t}^\prime$ if 
$B({\mathfrak t}) \subset B({\mathfrak t}^\prime)$ and if 
${\mathfrak t}$ is finer than ${\mathfrak t}^\prime$ as partitions. 

A standard way to construct a tower exploits an {\em induced transformation}. 
Given a set $B \in \C$ of positive measure, the {\em return time function} 
$r_B:B \to \N, y \mapsto \min\{n \in \N| T^ny \in B\}$ 
is well\nobreakdash-defined for a.e.\ $y \in B$, because $T$ is recurrent, or 
conservative; see \cite[Proposition~1.2.1]{Aa}. The induced 
transformation $T_B:B \to B,y \mapsto T^{r_B(y)}y$ is an ergodic, 
bi\nobreakdash-measurable bijection preserving the measure 
$\C \cap B \to {\mathbb R}_+ \cup \{\infty\},C \mapsto \nu(C)$, where 
$\C \cap B = \{B \cap A|A \in \C\}$. 
Then, $\{T^jB_i|i \in \N, 0 \le j < i\}$ is a tower, where 
$B_i=r_B^{-1}(i)$. See for details \cite[Section~1.5]{Aa}. 

A {\em Kakutani\nobreakdash-Rohlin tower}, or a {\em K\nobreakdash-R tower} for 
short, is a tower having finitely many columns. 
A K\nobreakdash-R tower is said to be {\em standard} if it has a unique 
atom of infinite measure, which we call the {\em infinite level} of the 
tower. This definition forces the infinite level to constitute a column of 
height one. If the base of a column of a standard tower has a finite measure, 
then we refer to the column as a {\em principal column}. 
We henceforth use the notation $\C_0:=\{C \in \C| 0 < \nu(C) < \infty\}$. 
\begin{definition}
Given a set $K \in \C_0$, a standard tower ${\mathfrak t}$ is said to be 
{\em $K$\nobreakdash-standard} if $K \subset K_{{\mathfrak t}}$, 
${\mathfrak t}$ refines $\{K, Y \setminus K\}$ as partitions, and 
each principal 
column of ${\mathfrak t}$ has a level included in $K$. 
\end{definition}

Let $\alpha=\{A_1,A_2,\dots,A_N\}$ be a finite partition of $Y$. {\em The 
$\alpha$\nobreakdash-name of a section} $\{T^iy|m \le i \le n\}$ is the word 
$\phi_\alpha(y)_{[m,n]}$. Let ${\mathfrak t}$ be a tower as \eqref{FP}. 
For $i \in \N$ and $w \in {\mathfrak A}_\alpha^{N_i}$, we set 
$B_{i, w}=\{y \in B_i|\phi_\alpha(y)_{[0,N_i)}=w\}$. The resulting tower:
\[
\bigcup_{i \in \N}\bigcup_{w \in {\mathfrak A}_\alpha^{N_i}}\{B_{i,w},TB_{i,w},
\dots, T^{N_i-1}B_{i,w}\} 
\]
is called the {\em refinement of ${\mathfrak t}$ according to $\alpha$}. 

The following lemmas will be significant ingredients in 
Section~\ref{constructions} for constructing {\em uniform} partitions. 

\begin{lemma}\label{tower_lemma1}
Let $N \in \N$ and $K \in \C_0$. 
Then, there exists a $K$\nobreakdash-standard tower 
${\mathfrak t}$ such that the height of every principal column of 
${\mathfrak t}$ is $N$ or $N+1$.
\end{lemma}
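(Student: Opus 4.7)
The plan is to construct the required tower by applying a Rokhlin--Alpern structure to the induced transformation $T_K$ on $K$, then subdividing vertically into $T$-columns of heights $N$ and $N+1$ anchored at the successive $K$-visits along each orbit.

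First, I use that $T_K$ on $(K,\nu|_K)$ is an ergodic aperiodic measure-preserving bijection of a non-atomic finite measure space. Applying Alpern's multiple Rokhlin tower theorem to $T_K$ with $T_K$-heights $M$ and $M+1$ (for a sufficiently large $M$, with $\gcd(M,M+1)=1$) yields an exact partition of $K$ into two $T_K$-towers with base $F\subset K$. For each $y\in F$, the $T$-orbit segment from $y$ up to just before the next return of the orbit to $F$ has $T$-length $R(y)=\sum_{i=0}^{h-1}r_K(T_K^i y)$, where $h\in\{M,M+1\}$ is the $T_K$-height of the column containing $y$, and contains the $h$ $K$-visits at positions $\sigma_i=\sum_{k<i}r_K(T_K^k y)$ for $i=0,\dots,h-1$. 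These segments are disjoint and cover $Y$ up to a null set.

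Second, within each segment I place sub-columns of $T$-heights $N$ and $N+1$ iteratively from the left: starting at $\sigma_0=0$, I take the sub-column $[\sigma_0,\sigma_0+h_0-1]$ with $h_0=N+1$ if $T^{\sigma_0+N}y\in K$ and $h_0=N$ otherwise, so that the sub-column contains the maximal initial cluster of $K$-visits within its span; then I iterate the same rule from the first $K$-visit past $\sigma_0+h_0-1$, with a boundary adjustment where the last sub-column of a segment would extend into the next. Orbit points lying in the gaps between sub-columns are in $Y\setminus K$ and are aggregated into a single infinite atom $A_1$. After a final refinement by the $\{K,Y\setminus K\}$-itinerary of each sub-column, the tower has only finitely many principal-column types, each of height $N$ or $N+1$ and each with its base $K$-visit as a level in $K$. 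One verifies $K\subset Y\setminus A_1$, $A_1\subset Y\setminus K$ with $\nu(A_1)=\infty$ (the principal columns together have finite measure while $\nu(Y)=\infty$), and that the tower refines $\{K,Y\setminus K\}$, making it $K$-standard.

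The main obstacle is ensuring global measurability and finiteness of the principal-column types, in particular the consistent handling of segment boundaries and of orbits where consecutive $K$-visits occur at small gaps. These issues are handled by noting that the placement rule depends only on the local $\{K,Y\setminus K\}$-pattern in an orbit window of size $O(N)$, and that by Kac's formula $\int_K r_K\,d\nu=\nu(Y)=\infty$, so $r_K$ is unbounded on almost every orbit and the cluster structure (and hence the sub-column iteration within each segment) terminates measurably in finitely many steps.
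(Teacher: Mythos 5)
Your strategy shares the paper's skeleton---build a tower, cut it into $T$-blocks of length $N$ or $N+1$, collapse everything non-$K$ into a single infinite level---but diverges in one design choice that opens a real gap. The paper first fixes $n$ so that every integer $\ge n$ is expressible as $aN+b(N+1)$ (the Frobenius observation for the coprime pair $N$, $N+1$), takes a tower over $T_C$ with $C,TC,\dots,T^{n-1}C$ disjoint, refines by $\{K,Y\setminus K\}$, and then cuts each column of height $h\ge n$ into \emph{exactly} $a$ blocks of $N$ levels and $b$ blocks of $N+1$ levels. There is no leftover and no spillover; uniting blocks with a common $\{K,Y\setminus K\}$-name gives finitely many column types, and the all-non-$K$ blocks are absorbed into the infinite level. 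You instead take an Alpern tower over $T_K$ and place blocks greedily, anchored at $K$-visits, pushing the in-between positions into $A_1$, hoping to avoid the Frobenius step.

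The ``boundary adjustment'' you invoke is exactly where this breaks down. If the last $K$-visit $\tau$ inside a segment has $r_K(T_K^{h-1}y)\le N+1$, the block $[\tau,\tau+h_\tau-1]$ overruns into the next segment, whose base is itself a $K$-visit at its own position $0$. Truncating the block ruins the height requirement; dropping it leaves $\tau$ uncovered and violates $K\subset K_{\mathfrak t}$; letting it spill over forces the next segment's greedy pass to start somewhere other than $0$, so the placement of a block at a given point depends on the entire history of previous segments rather than on a window of size $O(N)$, and the measurability-by-locality argument you sketch is not valid as stated. The appeal to $\int_K r_K\,d\nu=\infty$ does not rescue this: unboundedness of $r_K$ along a.e.\ orbit does not ensure that the return time at the \emph{top} of each Alpern column exceeds $N+1$, which is what would decouple consecutive segments. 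The paper's Frobenius choice of minimum segment length exists precisely to make every column tile exactly, so there is never any spillover to manage; once you add that numerical observation, you also no longer need Alpern's theorem---any tower with all column heights $\ge n$ suffices.
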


\begin{proof}
Take $n \in \N$ be so that every integer $ \ge n$ is written as 
\begin{equation}\label{comb}
aN + b (N+1)
\end{equation}
with $a,b \in \Z_+$. Since $T$ is ergodic, there exists a set 
$C \in \C_0$ 
such that $C,TC,\dots,T^{n-1}C$ are disjoint. Take a tower with base $C$. 
Refine the tower according to $K$. The heights of columns of the resulting 
tower are at least $n$. In view of the fact that the height of each column is 
written in the form of \eqref{comb}, divide each column into $a$ blocks of 
$N$ levels and $b$ blocks of $N+1$ levels. Of course, $a$ and $b$ depend on a 
column. Consider the tower whose columns are 
precisely these blocks. Since the heights of its columns are 
$N$ or $N+1$, by uniting all the columns whose fibers have a common 
$\{K,Y\setminus K\}$\nobreakdash-name, we change the tower into a 
K\nobreakdash-R tower. 
Unite all the levels of those columns of the K\nobreakdash-R tower, all of 
whose levels are disjoint from $K$, into a new level. 
The new level constitutes a new  column and its complement has infinite measure. 
The resulting tower is a standard K\nobreakdash-R tower with the desired 
properties. 
\end{proof}

\begin{lemma}\label{desired_refine}
Let $K \in \C_0$ and $n \in \N$. 
Suppose ${\mathfrak t}_1$ is a $K$\nobreakdash-standard tower. Then, there 
exists a $K$\nobreakdash-standard tower ${\mathfrak t}_2$ refining 
${\mathfrak t}_1$ such that the height of every principal column of 
${\mathfrak t}_2$ is between $n$ and $n+4N$, where $N$ is the largest column 
height of ${\mathfrak t}_1$.
\end{lemma}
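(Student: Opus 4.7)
My plan is to apply Lemma~\ref{tower_lemma1} with the set $K$ and the integer $n+2N$ to obtain a $K$-standard tower $\mathfrak{s}$ whose principal columns have height $n+2N$ or $n+2N+1$, and then to form the refinement $\mathfrak{s}'$ of $\mathfrak{s}$ according to $\mathfrak{t}_1$ in the sense defined just before the lemma. Then $\mathfrak{s}'$ automatically refines $\mathfrak{t}_1$, and each principal column of $\mathfrak{s}'$ has the same height as its parent column of $\mathfrak{s}$, refines $\mathfrak{t}_1$ at every level, and inherits a level in $K$: since $\mathfrak{t}_1$ refines $\{K,Y\setminus K\}$, the $\{K,Y\setminus K\}$-name of a fiber is preserved under the $\mathfrak{t}_1$-refinement, so any position witnessing a level in $K$ for the $\mathfrak{s}$-column continues to witness one for each of its sub-columns in $\mathfrak{s}'$.

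The only obstacle is that the infinite level $I$ of $\mathfrak{s}$, when refined by $\mathfrak{t}_1$, splits as $I=(I\cap L)\sqcup\bigsqcup_k (I\cap A_{i_k})$, where $A_{i_k}$ runs over the $\mathfrak{t}_1$-atoms contained in $K_{\mathfrak{t}_1}\setminus K$. The piece $I\cap L$ has infinite measure and is the intended infinite level of $\mathfrak{t}_2$, but each finite piece $I\cap A_{i_k}$ becomes a spurious height-$1$ principal column of $\mathfrak{s}'$ with no level in $K$, so $\mathfrak{s}'$ itself fails to be $K$-standard.

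I would repair this by absorbing each spurious piece into an adjacent genuine principal column of $\mathfrak{s}'$ along the unique $\mathfrak{t}_1$-column that contains it: since the piece lies at some level $j^{\ast}$ of a $\mathfrak{t}_1$-column of height at most $N$, completing the full $\mathfrak{t}_1$-column traversal extends the adjacent column by at most $N$ levels per side. The extended column automatically keeps a level in $K$ and picks up further $K$-levels from the absorbed $\mathfrak{t}_1$-column thanks to the $K$-standardness of $\mathfrak{t}_1$, so it remains a bona fide principal column; absorbing at most one spurious piece at each of the two ends keeps heights in $[n,n+4N]$. The hard part will be organizing these absorptions coherently, so that each spurious piece is uniquely matched to an adjacent principal column and the outcome is a genuine K-R tower with $I\cap L$ as its unique atom of infinite measure. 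This requires careful bookkeeping of the $T$-orbit transitions into and out of $I$ relative to $\mathfrak{t}_1$'s column structure, made tractable by the fact that a $T$-transition out of a $\mathfrak{t}_1$-column must land in $L$ or at the base of another $\mathfrak{t}_1$-column; the $4N$ slack in the claimed bound is exactly what the two-sided extension consumes.
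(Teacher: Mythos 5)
Your opening claim that ``$\mathfrak{s}'$ automatically refines $\mathfrak{t}_1$'' is where the argument breaks: for towers, the paper's notion of \emph{refine} requires both that $\mathfrak{s}'$ is finer than $\mathfrak{t}_1$ as a partition \emph{and} that $B(\mathfrak{s}')\subset B(\mathfrak{t}_1)$. Refining $\mathfrak{s}$ according to $\mathfrak{t}_1$ leaves the base unchanged, $B(\mathfrak{s}')=B(\mathfrak{s})$, and Lemma~\ref{tower_lemma1} gives you no control over where $B(\mathfrak{s})$ sits relative to $B(\mathfrak{t}_1)$. Your absorption step cannot repair this, since it is triggered only by spurious pieces coming from the infinite level of $\mathfrak{s}$: if the level $T^{-1}(\text{base of an }\mathfrak{s}'\text{-column})$ lies in another principal column of $\mathfrak{s}$ rather than in $I$, there is no spurious piece to absorb and the base stays misaligned with $B(\mathfrak{t}_1)$. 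This base containment is not decorative --- it is exactly the property invoked in Step~2 of Lemma~\ref{initial_partition} (``with base $B_2^\prime(2)\subset B_1(1)$'') and is needed so that the chain $\mathfrak{t}_1\subset\mathfrak{t}_2\subset\cdots$ of towers makes sense.

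The paper avoids this by not reusing Lemma~\ref{tower_lemma1} as a black box: it rebuilds the construction from scratch, choosing the base $C$ \emph{inside} $B(\mathfrak{t}_1)$ (which is possible since $T$ is aperiodic and $B(\mathfrak{t}_1)$ has positive measure), cutting the return-time tower over $C$ into blocks of size $n+2N$ or $n+2N+1$, and then \emph{moving each interior block boundary to the nearest level of $B(\mathfrak{t}_1)$}. Because the endpoints (the base $C$ and the top, which maps back into $C$) are already aligned, this single rounding step guarantees $B(\mathfrak{t}_2)\subset B(\mathfrak{t}_1)$, makes the refinement by $\mathfrak{t}_1$ a genuine column-by-column refinement, and yields the $[n,n+4N]$ height window directly. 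Your post-hoc absorption also has a bookkeeping hazard you gesture at but do not resolve: two adjacent $\mathfrak{s}'$-columns separated only by a short stretch of $I$ inside one $\mathfrak{t}_1$-column would both want to ``complete the traversal,'' and one of them must yield; moreover, the absorption only grows columns (from $n+2N$ upward), so the lower endpoint $n$ of the stated interval would never be approached, which is harmless, but the point is that the accounting you supply for $[n,n+4N]$ is not the right one. I would recommend adopting the paper's device of anchoring the base inside $B(\mathfrak{t}_1)$ at the outset and rounding boundaries, rather than trying to retrofit a tower produced by Lemma~\ref{tower_lemma1}.
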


\begin{proof}
Take $n_0 \in \N$ such that every integer $\ge n_0$ is written as 
$a(n+2N) + b(n+2N+1)$ with $a,b \in \Z_+$. Since $T$ is aperiodic, there exists 
a subset $C \in \C_0$ of $B({\mathfrak t}_1)$ such that 
$C,TC,\dots,T^{n_0-1}C$ are disjoint. Take a tower with base $C$. The heights 
of its columns are at least $n_0$. Let ${\mathfrak t}$ denote the 
refinement of the tower according to $\{K,Y \setminus K\}$. Divide each column 
of ${\mathfrak t}$ into some blocks 
of $n+2N$ levels and some blocks of $n+2N+1$ levels. The bottom level of each 
block is not necessarily included in $B({\mathfrak t}_1)$. So, we move the 
bottom level of each block to the nearest level in $B({\mathfrak t}_1)$. The 
heights of the resulting blocks are at least $n$ and at most $n+4N$. By uniting 
blocks of the same height into a column, we obtain a K-R tower 
${\mathfrak t}^\prime$ with $B({\mathfrak t}^\prime) \subset B({\mathfrak t}_1)$. 
Refine 
${\mathfrak t}^\prime$ according to ${\mathfrak t}_1$. If a column of 
${\mathfrak t}^\prime$ does not have a level included in $K$, then all the levels 
of the column are included in the infinite level of ${\mathfrak t}_1$. Unite all 
such columns of ${\mathfrak t}^\prime$ into a new level, which constitutes a new 
column. The resulting tower is the desired $K$-standard tower ${\mathfrak t}_2$. 
\end{proof}

\begin{lemma}\label{enough_hit}
Let $K \in \C_0$, $n \in \N$ and $\epsilon > 0$. Then, 
there exists $N \in \N$ for which the following holds: if 
${\mathfrak t}$ is a $K$\nobreakdash-standard tower such that 
the height of every principal column is at least $N$, then those fibers 
$\{y,Ty,\dots,T^{N_y-1}y\}$ of ${\mathfrak t}$ satisfying 
\begin{equation}\label{often}
\sum_{i=0}^{N_y-1}{\mathbf 1}_K(T^iy) \ge n
\end{equation}
covers at least $\nu(K) - \epsilon$ of $K$, where ${\mathbf 1}_K$ is the 
characteristic function of $K$. 
\end{lemma}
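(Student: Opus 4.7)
The plan is to reduce matters to the $\nu$-a.e.\ finiteness of forward and backward hitting-time functions on $K$ and then do one simple geometric check inside each principal column. For $y \in K$ define
\[
f(y) := \min\Bigl\{m \ge 0 : \sum_{i=0}^{m} \mathbf{1}_K(T^i y) \ge n\Bigr\}, \qquad g(y) := \min\Bigl\{m \ge 0 : \sum_{i=0}^{m} \mathbf{1}_K(T^{-i} y) \ge n\Bigr\}.
\]
Since the ergodic transformation $T$ is conservative (cited as \cite[Proposition~1.2.1]{Aa}), almost every $y \in K$ returns to $K$ infinitely often under both $T$ and $T^{-1}$, so $f$ and $g$ are finite $\nu$-a.e.\ on $K$. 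Choose $M \in \N$ with $\nu(\{y \in K : f(y) > M\}) < \epsilon/2$ and $\nu(\{y \in K : g(y) > M\}) < \epsilon/2$, and set $N := 2M+1$.

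Now let $\mathfrak{t}$ be a $K$-standard tower in which every principal column has height at least $N$. Since $K \subset K_{\mathfrak t}$, each $y \in K$ is of the form $T^k b$ with $b$ in the base of a principal column of some height $h \ge 2M+1$ and $0 \le k \le h-1$. The fiber through $y$ is $\{b, Tb, \dots, T^{h-1} b\}$, and $\sum_{i=0}^{h-1} \mathbf{1}_K(T^i b) = \sum_{i=-k}^{h-1-k} \mathbf{1}_K(T^i y)$. When $k \le h-1-M$, the forward segment $\{y, Ty, \dots, T^M y\}$ lies inside the fiber, so $f(y) \le M$ forces the fiber to register at least $n$ hits; symmetrically, when $k \ge M$ the backward segment $\{T^{-M} y, \dots, y\}$ lies inside the fiber, so $g(y) \le M$ does the same. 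Because $h-1 \ge 2M$, every admissible $k$ satisfies $k \le h-1-M$ or $k \ge M$, so any bad point (i.e., $y \in K$ whose fiber violates \eqref{often}) lies in $\{f > M\} \cup \{g > M\}$; this bad set has measure less than $\epsilon$, and its complement in $K$, of measure exceeding $\nu(K) - \epsilon$, is covered by fibers satisfying \eqref{often}.

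The work is essentially bookkeeping once the two hitting-time functions are in hand; the one substantive input is recurrence of $T$ on $K$. The subtle point requiring care is that the $n$ hits have to be registered inside a single fiber rather than along an arbitrary orbit segment, which is what forces introducing both a forward and a backward hitting time and taking $N$ roughly twice the hitting-time cutoff: depending on which half of its column a given point lies in, only one of the two directions is guaranteed to stay inside that fiber.
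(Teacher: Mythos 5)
Your proof is correct and follows essentially the same route as the paper's. The paper likewise uses recurrence of both $T$ and $T^{-1}$ to produce a single cutoff $N_0$ for which the set $F$ of points $y\in K$ accumulating at least $n$ forward hits and at least $n$ backward hits within $N_0$ steps fills $K$ up to measure $\epsilon$, sets $N=2N_0$, and observes that any fiber of height $\ge N$ meeting $F$ must register $\ge n$ hits (since from any position in such a fiber at least one of the forward or backward $N_0$-segments stays inside the fiber); your splitting into two hitting-time functions $f,g$ with separate $\epsilon/2$ budgets and $N=2M+1$ is only a cosmetic variant of the same idea.
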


\begin{proof}
Since both of $T$ and $T^{-1}$ are recurrent, there exists $N_0 \in \N$ such that 
\[
F:=\{y \in K|\sum_{i=0}^{N^\prime-1}{\mathbf 1}_K(T^iy) \ge n \textrm{ and } 
\sum_{i=0}^{N^\prime-1}{\mathbf 1}_K(T^{-i}y) \ge n \textrm{ if } 
N^\prime \ge N_0\}
\]
fills up $K$, up to a set of measure $\epsilon$, i.e.\ $\nu(K \setminus F) 
< \epsilon$. Suppose a standard K\nobreakdash-R tower ${\mathfrak t}$ 
satisfies the conditions in the statement with $N=2N_0$. 
Let $A$ denote the set of fibers $\{y,Ty,\dots,T^{N_y-1}y\}$ of 
${\mathfrak t}$ satisfying \eqref{often}. Since a fiber of ${\mathfrak t}$ having 
a nonempty intersection with $F$ is included in $A$, it follows that 
$\nu(F) \le \nu(A \cap K)$, so that 
$\nu(K \setminus A) \le \nu (K \setminus F) < \epsilon$. 
This completes the proof.
\end{proof}

\begin{proposition}\label{tower_lemma2}
Let $C,K \in \C_0$ be such that $C \subset K$. 
Let $0 < \epsilon < 1$ and $M \in \N$. Then, there exists $N \in \N$ for which 
the following holds: if the height 
of each principal column of a $K$\nobreakdash-standard tower 
${\mathfrak t}$ is greater than $N$, then those fibers 
$\{y,Ty, \dots, T^{N_y-1}y\}$ of ${\mathfrak t}$, which satisfy: 
\begin{equation*}\label{approx}
\left| \frac{\sum_{i=0}^{N_y-1} {\mathbf 1}_C(T^iy)}
{\sum_{i=0}^{N_y-1} {\mathbf 1}_K(T^iy)} - 
\frac{\nu(C)}{\nu(K)}\right| < \epsilon  \textrm{ and } \sum_{i=0}^{N_y-1}
{\mathbf 1}_K(T^iy) \ge M
\end{equation*}
cover at least $\nu(K)-\epsilon$ of $K$. 
\end{proposition}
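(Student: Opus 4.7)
The plan is to reduce to a two-sided Birkhoff--Egorov analysis for the induced transformation $T_K$, combining it with Lemma~\ref{enough_hit} to secure the minimum $K$-count. Since ${\mathfrak t}$ refines $\{K,Y\setminus K\}$, each principal column of index $i$ has a well-defined number $n_i$ of levels contained in $K$; if $y_* \in B_i$ is the base of such a fiber and $y_1 = T^{\ell_i} y_*$ is its first $K$-visit, then the successive $K$-visits in the fiber are $y_1, T_K y_1, \dots, T_K^{n_i - 1} y_1$. Because $C \subset K$, the proposition's ratio (with $y_*$ in the role of the base $y$) equals
\[
\frac{1}{n_i}\sum_{k=0}^{n_i - 1} \mathbf{1}_C(T_K^k y_1),
\]
and for any $K$-point $y = T_K^{k(y)} y_1$ of the fiber this rewrites as the two-sided $T_K$-average $\tfrac{1}{n_i}\sum_{k=-k(y)}^{n_i - 1 - k(y)}\mathbf{1}_C(T_K^k y)$.

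Since $T$ is ergodic and conservative with $0 < \nu(K) < \infty$, $T_K$ is ergodic on the finite measure space $(K,\nu|_K)$. Applying Birkhoff's theorem to both $T_K$ and $T_K^{-1}$ and combining the resulting one-sided averages convexly yields
\[
\frac{1}{a+b+1}\sum_{k=-a}^{b}\mathbf{1}_C(T_K^k y) \xrightarrow[\min(a,b)\to\infty]{} \frac{\nu(C)}{\nu(K)}
\]
for $\nu|_K$-a.e.\ $y \in K$. Egorov's theorem applied to the decreasing functions $\psi_m(y):=\sup_{\min(a,b)\ge m}\bigl|\tfrac{1}{a+b+1}\sum_{k=-a}^{b}\mathbf{1}_C(T_K^k y) - \tfrac{\nu(C)}{\nu(K)}\bigr|$ then produces $F \subset K$ with $\nu(K\setminus F) < \epsilon/3$ and $n_0 \in \N$ such that the two-sided average is within $\epsilon$ of $\nu(C)/\nu(K)$ whenever $y\in F$ and $\min(a,b)\ge n_0$. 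Set $n_1 := \max\{M,\, 2n_0,\, \lceil 6 n_0 \nu(K)/\epsilon\rceil\}$ and apply Lemma~\ref{enough_hit} with $n=n_1$ and tolerance $\epsilon/3$ to obtain $N \in \N$: in any $K$-standard tower with principal heights $\ge N$, the fibers with at least $n_1$ $K$-visits cover $\nu(K) - \epsilon/3$ of $K$.

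For such a tower, decompose the set of $y \in K$ whose fiber fails the proposition into (i) $y$ in a fiber with $n_i < n_1$, contributing $< \epsilon/3$ by the choice of $N$; (ii) $y\notin F$, contributing $< \epsilon/3$; and (iii) $y$ in the bottom $n_0$ or top $n_0$ $T_K$-levels of a column with $n_i \ge n_1$, whose total measure is at most $2n_0\sum_{i:\,n_i\ge n_1}\nu(B_i) \le 2n_0\nu(K)/n_1 < \epsilon/3$. Each remaining $y$ lies in a fiber with $n_i \ge M$ $K$-visits whose ratio is within $\epsilon$ of $\nu(C)/\nu(K)$, by the Egorov bound and the rewriting above. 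The main difficulty, and reason for using the two-sided formulation, is that a one-sided Birkhoff argument applied to the column bases $y_1$ yields a bad-measure bound $\sum_i n_i \nu(T^{\ell_i}B_i \setminus F)$, which is not controlled by $\nu(K\setminus F)$ alone; the two-sided average places $y$ itself in $F$, so this weighted sum is replaced by $\nu(K\setminus F)$ plus a fringe term tamed by Lemma~\ref{enough_hit}.
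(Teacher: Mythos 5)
Your proof is correct, and it takes a genuinely different route from the paper's. The paper works directly with $T$ and Hopf's pointwise ratio ergodic theorem (one-sided), producing a set $F$ of good starting points, then shows by an integral estimate that the fibers in which $F$-points are dense among $K$-points cover most of $K$; inside each such fiber it performs a block decomposition into $F$-anchored segments of fixed $K$-length $N_1$ and sums the errors term by term. Your argument instead passes to the induced map $T_K$ and exploits the algebraic identity that, because $C\subset K$, the fiber ratio equals the two-sided $T_K$-average over the whole fiber anchored at \emph{any} interior $K$-point of that fiber. This makes the two-sided Birkhoff theorem plus Egorov directly applicable to points away from the top and bottom $T_K$-fringe, and the bad set splits cleanly into three pieces: short fibers (handled by Lemma~\ref{enough_hit}), points outside the Egorov set, and fringe points (handled by $\sum_i n_i\nu(B_i)=\nu(K)$, which gives $\sum_{n_i\ge n_1}\nu(B_i)\le\nu(K)/n_1$). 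The paper's approach avoids the induced transformation and Egorov and is closer to classical covering arguments; yours is shorter and eliminates the block decomposition entirely, the enabling step being exactly the one you flag at the end — anchoring the Egorov condition at $y$ itself rather than at a column base, so the bad measure is $\nu(K\setminus F)$ plus a fringe term rather than a weighted sum over bases.
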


\begin{proof}
Applying Hopf's pointwise ergodic theorem (cf.\ \cite[Section~2.2]{Aa}), we may 
find $N_1 \in \N$ such that the set $F$ of those points $y \in K$ which satisfy: 
\[
\left| \frac{\sum_{i=0}^{N-1}{\mathbf 1}_C(T^iy)}
{\sum_{i=0}^{N-1}{\mathbf 1}_K(T^iy)} - \frac{\nu(C)}{\nu(K)}\right|< 
\frac{\epsilon}{3} \textrm{ if } N \ge N_1
\]
has measure at least $\nu(K) - \frac{\epsilon^2}{18}$. Choose an integer 
$N_0 > N_1$ so that $\frac{N_1}{N_0} < \frac{\epsilon}{3}$. In view of Lemma~\ref{enough_hit}, 
there exists an integer $N \ge N_0$ for which the following holds: if 
${\mathfrak t}$ is as in the statement of this lemma, 
then the set $G$ of those fibers $\{y,Ty,\dots,T^{N_y-1}y\}$ satisfying 
$\sum_{i=0}^{N_y-1}{\mathbf 1}_K(T^iy) > N_0$ covers at least 
$\nu(K) - \frac{\epsilon^2}{18}$ of $K$. 

Let ${\mathfrak t}$ be as in the statement. Put
\begin{align*}
A &=\bigcup_{y \in B({\mathfrak t})}\{y,Ty,\dots,T^{N_y-1}y| 
\sum_{i=0}^{N_y-1}{\mathbf 1}_{F \cap G}(T^iy)>
\left(1-\frac{\epsilon}{3}\right)\sum_{i=0}^{N_y-1}{\mathbf 1}_K(T^iy)\}.
\end{align*}
Since 
\begin{align*}
\int_{K \setminus A} {\mathbf 1}_{F \cap G} d \nu & = 
\sum_i \sum_{\{j|T^jB_i \subset K\}}\int_{T^jB_i \setminus A}{\mathbf 1}_{F 
\cap G} d 
\nu = \sum_i \int_{B_i \setminus A} \sum_{\{j|T^jB_i \subset K\}}{\mathbf 1}_{F 
\cap G}
\circ T^j d\nu \\
& \le \sum_i\int_{B_i \setminus A} \left(1 - \frac{\epsilon}{3}\right)
\sum_{n=0}^{N_y-1}{\mathbf 1}_K(T^ny) d\nu(y) = 
\left(1-\frac{\epsilon}{3}\right) \nu(K \setminus A),
\end{align*}
where $B_i$ is the base of a column of ${\mathfrak t}$, we obtain 
\[
\nu(F \cap G)= \int_{K \setminus A} {\mathbf 1}_{F \cap G} d\nu 
+ \int_{K \cap A} {\mathbf 1}_{F \cap G} d\nu 
\le \nu(K) - \frac{\epsilon}{3} \nu(K \setminus A). 
\]
This together with the inequality: 
$\nu(F \cap G) = \nu(K) - \nu(K \setminus (F \cap G)) 
\ge \nu(K) - \frac{\epsilon^2}{9}$ yields $\nu(K \setminus A) \le \frac{\epsilon}{3} < 
\epsilon$.

From each fiber $\{y,Ty,\dots,T^{N_y-1}y\}$ included in $A$, we may choose 
disjoint blocks $\{y_j,Ty_j,\dots,T^{h_j-1}y_j\}$, $y_j \in F$, $1 \le j \le l$, 
so that $\sum_{i=0}^{h_j-1}{\mathbf 1}_K(T^iy_j) = N_1$ for each $j$, and so that 
the union of blocks has at least 
$(1-\frac{\epsilon}{3})\sum_{i=0}^{N_y-1}{\mathbf 1}_K(T^iy)-N_1$ 
points in $K$. Let $I$ denote the complement in the fiber $\{y,Ty,\dots,T^{N_y-1}
y\}$ of the union of blocks. Using the assumption that $C \subset K$, we obtain
\begin{align*}
\left|\sum_{i=0}^{N_y-1}{\mathbf 1}_C(T^iy)-\frac{\nu(C)}{\nu(K)}
\sum_{i=0}^{N_y-1}{\mathbf 1}_K(T^iy)\right| & \le 
\sum_{j=1}^l \left| \sum_{i=0}^{h_j-1}{\mathbf 1}_C(T^iy_j) - 
\frac{\nu(C)}{\nu(K)}\sum_{i=0}^{h_j-1}{\mathbf 1}_K(T^iy_j)\right| \\
& \quad + \sum_{y^\prime \in I}\left| {\mathbf 1}_C(y^\prime) - 
\frac{\nu(C)}{\nu(K)}{\mathbf 1}_K(y^\prime)\right| \\
& \le \sum_{j=1}^l \frac{\epsilon}{3} \sum_{i=0}^{h_j-1}{\mathbf 1}_K(T^iy_j)+
\frac{\epsilon}{3} \sum_{i=0}^{N_y-1}{\mathbf 1}_K(T^iy) + N_1 \\
& \le \frac{2}{3}\epsilon \sum_{i=0}^{N_y-1}{\mathbf 1}_K(T^iy) + N_1 \\
& \le \frac{2}{3}\epsilon \sum_{i=0}^{N_y-1}{\mathbf 1}_K(T^iy) + 
\frac{\epsilon}{3}N_0,
\end{align*}
and hence $\displaystyle \left| 
\frac{\sum_{i=0}^{N_y-1}{\mathbf 1}_C(T^iy)}
{\sum_{i=0}^{N_y-1}{\mathbf 1}_K(T^iy)} - \frac{\nu(C)}{\nu(K)}\right| < \epsilon$.
\end{proof}

\begin{lemma}\label{often_lemma}
Suppose $\alpha$ is a finite partition of $Y$. Suppose a refining sequence 
$\{{\mathfrak t}_k\}_{k \in \N}$ of $K_\alpha$\nobreakdash-standard towers of 
$Y$ satisfies the properties:
\begin{enumerate}
\item\label{length_one}
${\mathfrak t}_1$ is finer than $\alpha$ as partitions;
\item\label{double}
$m_k:=\min \{ \sharp\{A \subset K_\alpha|A \in {\mathfrak c}\} | {\mathfrak c} 
\textrm{ is a principal column of } {\mathfrak t}_k\} \to \infty \textrm{ as 
} k \to \infty$. 
\end{enumerate}
Then, $\sharp(\Orb_{S_\alpha}(x) \cap K_{\phi_\alpha(\alpha)}) = \infty$ for all 
$x \in X_\alpha$.
\end{lemma}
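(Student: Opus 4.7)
The plan is to argue by contradiction. Suppose some $x\in X_\alpha$ has only finitely many coordinates distinct from $1$. Since $X_\alpha=\hat{X}_\alpha\setminus\{1^\infty\}$, the set $\{n\in\Z:x_n\ne 1\}$ is nonempty and finite, so I can fix $n_0:=\max\{|n|:x_n\ne 1\}$ together with some $n^*$ with $|n^*|\le n_0$ and $x_{n^*}\ne 1$. The goal is to show that this forces an incompatibility between the length of the window in which the orbit of a lift of $x$ must stay in $A_1$ and the amount of $K_\alpha$-mass that Property~\eqref{double} forces inside a principal column of ${\mathfrak t}_k$.

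Because $x$ lies in the support $\hat{X}_\alpha$, the cylinder fixing $x_{-k}x_{-k+1}\cdots x_k$ has positive $\hat{\mu}_\alpha$-measure for every $k\ge n_0$; equivalently the set
\[
E_k:=\bigcap_{i=-k}^{k} T^{-i}A_{x_i}
\]
has positive $\nu$-measure. A point $y\in E_k$ satisfies $T^iy\in A_1$ for all $i$ with $n_0<|i|\le k$, so the orbit section $\{T^{-k}y,\dots,T^{k}y\}$ meets $K_\alpha$ in at most $2n_0+1$ points, while at the same time $T^{n^*}y\in A_{x_{n^*}}\subset K_\alpha$, so it meets $K_\alpha$ at least once.

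Now use hypothesis~\eqref{double}: choose $k_1$ with $m_{k_1}>2n_0+1$, and let $H$ be the maximum column height of the K-R tower ${\mathfrak t}_{k_1}$ (finite, since a K-R tower has finitely many columns). Pick $k>H+n_0$ and take any $y\in E_k$. Because $T^{n^*}y\in K_\alpha$ and ${\mathfrak t}_{k_1}$ is $K_\alpha$-standard, $T^{n^*}y$ avoids the infinite level, so it sits at some level of a principal column of ${\mathfrak t}_{k_1}$. The full fiber of ${\mathfrak t}_{k_1}$ passing through $T^{n^*}y$ consists of orbit points $T^ly$ with $|l-n^*|<H$, hence $|l|<H+n_0<k$. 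Thus the entire fiber lies inside $\{T^{-k}y,\dots,T^ky\}$, which meets $K_\alpha$ in at most $2n_0+1$ points; but every principal column of ${\mathfrak t}_{k_1}$ contributes at least $m_{k_1}>2n_0+1$ levels inside $K_\alpha$, a contradiction. Hence $E_k=\emptyset$, contradicting $\nu(E_k)>0$.

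The only delicate point is the interplay of the two parameters: first use \eqref{double} to choose $k_1$ making $m_{k_1}$ beat the intrinsic count $2n_0+1$ determined by $x$, then exploit the finiteness of the number of columns of ${\mathfrak t}_{k_1}$ to pick $k$ so large that any whole fiber through a $K_\alpha$-point of the window remains inside the window. Hypothesis~\eqref{length_one} is tacitly used in identifying the infinite level of every ${\mathfrak t}_k$ with a subset of $A_1$, since every tower in the sequence refines ${\mathfrak t}_1$ and hence $\alpha$; everything else is bookkeeping.
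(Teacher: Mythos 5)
Your argument is correct and is essentially the paper's proof recast as a contradiction: both lift a long central window of $x$ to a positive-measure subset of $Y$, locate a lifted $K_\alpha$-point inside a principal column of some ${\mathfrak t}_k$, and observe that the whole fiber through that point lies inside the lifted window yet carries at least $m_k$ hits to $K_\alpha$ (the paper argues directly with window half-length $h_k$, the maximal principal-column height of ${\mathfrak t}_k$, and gets a lower bound of $m_k$ for every $k$; you fix $k_1$ with $m_{k_1}>2n_0+1$ and then enlarge the window, which is the same idea). One small inaccuracy in your closing remark: the step where $T^{n^*}y$ avoids the infinite level follows already from $K_\alpha\subset K_{{\mathfrak t}_{k_1}}$, which is built into the definition of a $K_\alpha$-standard tower, so the hypothesis that ${\mathfrak t}_1$ is finer than $\alpha$ is not in fact what your argument (or the paper's, which likewise reaches $y_k\in K_{{\mathfrak t}_k}$ via $K_\alpha\subset K_{{\mathfrak t}_1}\subset K_{{\mathfrak t}_k}$) relies on.
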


\begin{proof}
Let $x \in X_\alpha$. If there exists a point $x^\prime \in \Orb_{S_\alpha}(x)$ 
such that 
$\sharp (\Orb_{S_\alpha}(x^\prime) \cap K_{\phi_\alpha(\alpha)})=\infty$, then 
$x$ has the same property. We may assume that $x_0 \ne 1$. For each 
$k \in \N$, let $h_k$ denote the largest height of principal columns of 
${\mathfrak t}_k$. 
There exists a sequence $\{y_k\}_{k \in \N} \subset Y$ such that for each 
$k \in \N$, the $\alpha$\nobreakdash-name $w(k)$ of 
$\{T^iy_k|-h_k \le i < h_k\}$ is identical with $x_{[-h_k,h_k)}$. Since 
$y_k \in K_{{\mathfrak t}_1} \subset K_{{\mathfrak t}_k}$ for all $k \in \N$, 
it holds that
\[ 
\sharp(\Orb_{S_\alpha}(x) \cap K_{\phi_\alpha(\alpha)}) \ge 
\sharp\{1 \le i \le 2h_k|w(k)_i \ne 1\} \ge m_k \to \infty \textrm{ as } k 
\to \infty.
\]
\end{proof}

\section{Unique invariant Radon measure}\label{sec_top}

We first define a class of topological dynamical systems which are dealt with 
often in the remainder of this paper. We also define the 
inverse limit of an inverse system consisting of the systems. 

A topological space is called a {\em Cantor set} if it is a totally disconnected, 
compact metric space without isolated points. If a homeomorphism $S$ acts on a 
Cantor set $X$, then a topological dynamical system $(X,S)$ is called a 
{\em Cantor system}. If the one\nobreakdash-point compactification 
$\hat{X}:=X \cup \{\infty\}$ of a locally compact (non\nobreakdash-compact) 
metric space $X$ is a Cantor set, then we call $X$ a 
{\em locally compact Cantor set}. It is easy to see that such a space $X$ has a 
countable base of compact open sets. A positive Borel measure 
$\mu$ on the space $X$, which is not identically zero, is called a 
{\em Radon measure} if $\mu(A) < \infty$ for any compact set $A \subset X$. If a 
homeomorphism $S:X \to X$ is {\em minimal}, i.e.\ 
$\overline{\Orb_S(x)}=X$ for all $x \in X$, 
then we call the pair $(X,S)$ a {\em locally compact Cantor minimal system}. Let 
$(X,S)$ be such a system. If a Radon measure $\mu$ on $X$ is 
$S$\nobreakdash-invariant, then $\mu(E) > 0$ for any nonempty open set $E \subset 
X$. An $S$\nobreakdash-invariant Radon measure exists; see for details \cite{Y4}. 
A unique extension $\hat{S} : \hat{X} \to \hat{X}$ of $S$ to a homeomorphism 
is {\em almost minimal} in the sense of A.~Danilenko~\cite{D}, i.e.\ $\hat{S}$ 
has a unique fixed point $\infty$ and the orbit of any other point is dense in 
$\hat{X}$. We also say that the system $(\hat{X},\hat{S})$ is almost minimal. 
Whenever we use the notation for an almost minimal system $(\hat{X},\hat{S})$, 
we suppose $(X,S)$ denote a locally compact Cantor minimal system whose unique 
extension is $(\hat{X},\hat{S})$. 
It is known that any almost minimal Cantor system is topologically conjugate 
to the Vershik map arising from an almost simple ordered 
Bratteli diagram, and vice\nobreakdash-versa; see for details \cite{D}.

Let $(\hat{X}_1,\hat{S}_1)$ and $(\hat{X}_2,\hat{S}_2)$ be almost minimal 
Cantor systems. If a continuous surjection $\phi:\hat{X}_1 \to \hat{X}_2$ 
satisfies $\hat{S}_2 \circ \phi = \phi \circ \hat{S}_1$, then $\phi$ is called a 
{\em factor map} from $(\hat{X}_1,\hat{S}_1)$ to $(\hat{X}_2,\hat{S}_2)$, and 
$\hat{S}_2$ is called a {\em factor} of $\hat{S}_1$. Let $\phi$ be the 
factor map. Let  $z_i$ denote a unique 
fixed point of $\hat{S}_i$. Since $\hat{S}_2(\phi(z_1))=\phi(z_1)$, we have 
$\phi(z_1)=z_2$. We may also verify $\phi(\hat{X}_1 \setminus \{z_1\})=
\hat{X}_2 \setminus \{z_2\}$. 
Let $\{(\hat{X}_n,\hat{S}_n)\}_{n \in \N}$ be a family of almost minimal Cantor 
systems. Suppose that for each pair $(m,n) \in \N \times \N$ with $m \ge n$, 
there exist a factor map $\phi_{n,m}$ from $(\hat{X}_m,\hat{S}_m)$ to 
$(\hat{X}_n,\hat{S}_n)$. If 
\begin{enumerate}
\item
$\phi_{n,n}=\id_{\hat{X}_n}$ for all $n \in \N$;
\item
$\phi_{n,m} \circ \phi_{m,l} = \phi_{n,l}$ for 
all $l,m,n \in \N$ with $l \ge m \ge n$,
\end{enumerate}
then we call $(\hat{X}_n,\hat{S}_n,\phi_{n,m})$ an 
{\em inverse system} of almost minimal Cantor systems. Set 
\[
\hat{X}=\{(x_n)_{n \in \N} \in \prod_{n \in \N} 
\hat{X}_n|\phi_{n,m}(x_m)=x_n \textrm{ if } m \ge n\}.
\]
Endow $\hat{X}$ with the relative topology induced by the product topology, so 
that $\hat{X}$ is a Cantor set. 
Define a homeomorphism $\hat{S}:\hat{X} \to \hat{X}$ by $(x_n)_n 
\mapsto (\hat{S}_n x_n)_n$. The homeomorphism $\hat{S}$ has a 
unique fixed point $z:=(z_n)_n$, where $z_n$ is a unique fixed 
point of $\hat{S}_n$. A topological dynamical system $(\hat{X},\hat{S})$ 
is called an {\em inverse limit} of the inverse system 
$(\hat{X}_n,\hat{S}_n,\phi_{n,m})$. Let $S$ denote the restriction 
of $\hat{S}$ to the complement $X$ of $z$ in $\hat{X}$. 
The projection $p_n:\hat{X} \to \hat{X}_n$, $(x_k)_k \mapsto x_n$ is surjective. 
It follows that $p_n^{-1}(z_n) = \{z\}$ for all $n \in \N$. 
Clearly, $p_n = \phi_{n,m} \circ p_m$ if $m \ge n$. Equip 
$\hat{X}$ with the $\sigma$\nobreakdash-algebra $\hat{\B}$ generated by an 
algebra $\bigcup_{n \in \N}p_n^{-1}\hat{\B}_n$, where $\hat{\B}_n$ is the 
Borel $\sigma$\nobreakdash-algebra of $\hat{X}_n$.

Suppose $\hat{\mu}_n$ is an $\hat{S}_n$\nobreakdash-invariant 
measure on $\hat{X}_n$, assigning zero to $\{z_n\}$, whose restriction $\mu_n$ to 
$X_n$ is a Radon measure. Assume that for all $m,n \in \N$ with $m \ge n$, 
\begin{equation}
\hat{\mu}_m \circ \phi_{n,m}^{-1} = \hat{\mu}_n.
\end{equation}
In view of Kolmogorov's extension theorem \cite{Okabe} for infinite measures, 
there exists a unique, $\sigma$\nobreakdash-finite measure $\hat{\mu}$ on 
$\hat{X}$ satisfying the property that for every $n \in \N$, 
\begin{equation}\label{kolmo}
\hat{\mu} \circ p_n^{-1} = \hat{\mu}_n. 
\end{equation}
See also \cite{Yam}. Complete $\hat{\B}$ with respect to $\hat{\mu}$. Since 
for all $n \in \N$,
\[
(\hat{\mu} \circ \hat{S}^{-1}) \circ p_n^{-1}=\hat{\mu} \circ p_n^{-1} \circ 
\hat{S}_n^{-1}= \hat{\mu}_n \circ \hat{S}_n^{-1} = \hat{\mu}_n, 
\]
it follows from \eqref{kolmo} that $\hat{\mu}$ is $\hat{S}$\nobreakdash-invariant. 
Since $p_n = \phi_{n,m} \circ p_m$ for all $m,n \in \N$ with 
$m \ge n$, we may see the family 
$\bigcup_{k \in \N}\{{p_k}^{-1}(E)|E \subset \hat{X}_k \textrm{ clopen}\}$ 
is a base of the topology of $\hat{X}$. Let $F={p_k}^{-1}(E)$ with a clopen set 
$E \subset \hat{X}_k$ and with $k \in \N$. 
Suppose $z \notin F$, i.e.\ $z_k \notin E$. 
If $F \ne \emptyset$, i.e.\ $E \ne \emptyset$, then 
$0 < \hat{\mu}(F) = \hat{\mu}_k(E) < \infty$, so that the restriction $\mu$ of 
$\hat{\mu}$ to $X$ is a Radon measure assigning a strictly positive value 
to any nonempty open set.

\begin{lemma}\label{unique_Radon}
If for each $n \in \N$, the locally compact Cantor minimal system $(X_n,S_n)$ 
has a unique, up to scaling, invariant Radon measure, then so does 
$(X,S)$ and, in addition, $S$ is minimal.
\end{lemma}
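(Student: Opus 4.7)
The plan is to prove the two assertions separately, handling uniqueness first and then deriving minimality from the minimality of each factor system.

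For uniqueness, let $\mu'$ be another $S$-invariant Radon measure on $X$, and extend it to $\hat{\mu}'$ on $\hat{X}$ by declaring $\hat{\mu}'(\{z\})=0$; then $\hat{\mu}'$ is $\hat{S}$-invariant. I want to apply uniqueness factor by factor, so the first step is to push $\hat{\mu}'$ forward by $p_n$. For any compact $K\subset X_n$, the set $K$ is closed in $\hat{X}_n$ and omits $z_n$, so $p_n^{-1}(K)$ is closed in $\hat{X}$ (hence compact) and, because $p_n^{-1}(z_n)=\{z\}$, entirely contained in $X$. Therefore $(\hat{\mu}'\circ p_n^{-1})(K)=\mu'(p_n^{-1}(K))<\infty$, so the restriction of $\hat{\mu}'\circ p_n^{-1}$ to $X_n$ is an $S_n$\nobreakdash-invariant Radon measure. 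By the hypothesis there is a constant $c_n>0$ with $\hat{\mu}'\circ p_n^{-1}=c_n\hat{\mu}_n$.

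Next I use the compatibility $p_n=\phi_{n,m}\circ p_m$ together with the given consistency $\hat{\mu}_m\circ\phi_{n,m}^{-1}=\hat{\mu}_n$: for $m\ge n$,
\[
c_n\hat{\mu}_n=\hat{\mu}'\circ p_n^{-1}=(\hat{\mu}'\circ p_m^{-1})\circ\phi_{n,m}^{-1}=c_m\hat{\mu}_m\circ\phi_{n,m}^{-1}=c_m\hat{\mu}_n,
\]
so $c_n$ is independent of $n$, say equal to $c$. Then $\hat{\mu}'\circ p_n^{-1}=c\hat{\mu}_n=(c\hat{\mu})\circ p_n^{-1}$ on each $\hat{\B}_n$, i.e.\ the two $\sigma$\nobreakdash-finite measures $\hat{\mu}'$ and $c\hat{\mu}$ agree on the generating algebra $\bigcup_n p_n^{-1}\hat{\B}_n$. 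The extension of Kolmogorov's theorem to infinite measures cited in \cite{Okabe} then forces $\hat{\mu}'=c\hat{\mu}$, hence $\mu'=c\mu$.

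For minimality of $S$, fix $x\in X$ and a nonempty open $U\subset X$. The discussion preceding the lemma shows that the sets $p_k^{-1}(E)$ with $E\subset\hat{X}_k$ clopen and $z_k\notin E$ form a base for the topology of $X$, so I may assume $U=p_k^{-1}(E)$ with such an $E\subset X_k$. Since $p_k^{-1}(z_k)=\{z\}$ and $x\ne z$, $p_k(x)\in X_k$; by the minimality of $(X_k,S_k)$, $\Orb_{S_k}(p_k(x))$ meets $E$, so there is $n\in\Z$ with $S_k^n p_k(x)=p_k(\hat{S}^n x)\in E$. As $z$ is the unique fixed point of $\hat{S}$ and $x\ne z$, $\hat{S}^n x\ne z$, hence $\hat{S}^n x=S^n x\in p_k^{-1}(E)=U$. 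Thus $\Orb_S(x)$ meets every basic open set, proving that $S$ is minimal.

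The main technical point to get right is verifying that $\hat{\mu}'\circ p_n^{-1}$ restricts to a Radon measure on $X_n$, because without this one cannot invoke the uniqueness hypothesis on each factor; this hinges on the fact that $p_n$ carries compact subsets of $X_n$ back to compact subsets of $X$, which in turn uses the identity $p_n^{-1}(z_n)=\{z\}$. Once that is in hand, the rest is a straightforward tracking of scalars through the inverse system and an appeal to the infinite\nobreakdash-measure Kolmogorov uniqueness.
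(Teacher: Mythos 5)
Your uniqueness argument is essentially the paper's: push forward to each coordinate via $p_n$, apply per\nobreakdash-level uniqueness to get scalars $c_n$, use the compatibility $p_n=\phi_{n,m}\circ p_m$ and the consistency of the $\hat{\mu}_n$ to see that all $c_n$ coincide, and then conclude by the uniqueness clause of the infinite\nobreakdash-measure Kolmogorov extension theorem. You supply a bit more detail than the paper in checking that $\hat{\mu}'\circ p_n^{-1}$ restricts to a Radon measure on $X_n$ (compactness of $p_n^{-1}(K)$, the fact that it avoids $z$ since $p_n^{-1}(z_n)=\{z\}$), which the paper leaves implicit but which is indeed the key point that lets the per\nobreakdash-level hypothesis be invoked.

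For minimality you deviate from the paper. The paper derives minimality from the last clause of Proposition~\ref{existence}: having just established uniqueness, and having noted before the lemma that $\mu$ gives positive mass to every nonempty open set, it cites that clause to conclude $S$ is minimal (this implicitly requires checking the standing hypothesis of Proposition~\ref{existence} for the inverse limit, which does hold by pulling back a compact open set from any coordinate and using per\nobreakdash-level minimality). You instead give a direct, self\nobreakdash-contained argument: reduce to basic open sets $p_k^{-1}(E)$ with $z_k\notin E$, project a given point into $X_k$ (legitimate since $p_k^{-1}(z_k)=\{z\}$), use minimality of $(X_k,S_k)$ to land the $S_k$\nobreakdash-orbit in $E$, and lift back, noting that the lifted iterate avoids $z$ because $z$ is the unique $\hat S$\nobreakdash-fixed point. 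This is the standard ``minimality passes to inverse limits'' argument. It is arguably cleaner here, since it makes no reference to invariant measures and avoids the need to re\nobreakdash-verify the hypothesis of Proposition~\ref{existence} for $(X,S)$; the paper's route, on the other hand, keeps all the work concentrated in Proposition~\ref{existence}. Both are correct.
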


\begin{proof}
Let $\mu_n$ and $\hat{\mu}_n$ be as above, so that $\mu_n$ is the unique invariant 
Radon measure of $(X_n,S_n)$. 
Suppose $\rho$ is an $S$\nobreakdash-invariant Radon measure on $X$. Define 
a measure $\hat{\rho}$ on $\hat{X}$ by $\hat{\rho}(E)=\rho(E \cap X)$ for $E 
\in \hat{\B}$. Put $\hat{\rho}_n=\hat{\rho} \circ p_n^{-1}$. Since 
$\hat{\rho}_n$ is an $\hat{S}_n$\nobreakdash-invariant measure whose restriction 
to $X_n$ is a Radon measure and since $\hat{\rho}_n(\{z_n\})=0$, there exists 
$c_n > 0$ such that $\hat{\rho}_n = c_n \hat{\mu}_n$. Since 
$\hat{\rho}_n = \hat{\rho}_m \circ \phi_{n,m}^{-1} = c_m 
\hat{\mu}_m \circ \phi_{n,m}^{-1} = c_m \hat{\mu}_n$ 
for all $m,n \in \N$ with $m \ge n$, it follows that $c_n$ is a constant, say $c$. 
Since $(c^{-1}\hat{\rho}) \circ p_n^{-1} = \hat{\mu}_n$ for all $n \in \N$, 
the uniqueness of a measure satisfying \eqref{kolmo} yields $c^{-1}\hat{\rho} = 
\hat{\mu}$. This shows the first assertion. 
The last assertion follows from the second statement of 
Proposition~\ref{existence} below. 
\end{proof}

The next goal of this section is to prove criteria (Proposition~\ref{existence}) 
for a homeomorphism $S$ acting on a locally compact Cantor set $X$ to have a 
unique, up to scaling, invariant Radon measure. Let ${\mathfrak F}$ denote the 
ring of compact open subsets of $X$. 
For $N \in \N$, a function $f$ on a space $Z$ and a bijection $U:Z \to Z$, we set 
\[
U_Nf=\sum_{i=-N}^{N-1}f \circ U^i.
\]

\begin{proposition}\label{existence}
Assume a set $K \in {\mathfrak F}$ satisfies the property that for all $x \in X$, 
there exists $N \in \N$ such that $S_N {\mathbf 1}_K(x)>0$. 
Then, there exists an $S$\nobreakdash-invariant Radon measure. 
Then, the following conditions are mutually equivalent:
\begin{enumerate}
\item\label{uniform}
$\lim_{N \to \infty} S_N{\mathbf 1}_K(x)=\infty$ for any $x \in X$, 
and in addition, 
for any $A \in {\mathfrak F}$ and for any $\epsilon>0$, there exist 
$c \ge 0$ and $m \in \N$ such that for any $x \in K$, 
\[
S_N{\mathbf 1}_K(x) \ge m \Rightarrow 
\left|\frac{S_N {\mathbf 1}_A(x)}{S_N {\mathbf 1}_K(x)} - c\right|<\epsilon;
\]
\item\label{pointwise}
for any $A \in {\mathfrak F}$, there exists $c \ge 0$ such that 
for any $x \in K$, 
\[
\lim_{N \to \infty} \frac{S_N {\mathbf 1}_A(x)}{S_N {\mathbf 1}_K(x)}=c;
\]
\item\label{measure}
there exists an $S$\nobreakdash-invariant Radon measure $\mu$ such that 
for any $A \in {\mathfrak F}$ and for any $x \in K$, 
\[
\lim_{N \to \infty}\frac{S_N {\mathbf 1}_A(x)}{S_N {\mathbf 1}_K(x)}=
\frac{\mu(A)}{\mu(K)};
\]
\item\label{unique}
$S$ has a unique, up to scaling, invariant Radon measure.
\end{enumerate}

When these conditions hold, it holds that $S$ is minimal if and only if 
$\mu(A)>0$ for any $A \in {\mathfrak F}$, where $\mu$ is the unique invariant 
Radon measure. 
\end{proposition}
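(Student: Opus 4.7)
The plan is to establish existence of an $S$-invariant Radon measure by a Krylov--Bogolyubov argument, then prove the equivalences via the cycle $(1)\Rightarrow(2)\Rightarrow(3)\Rightarrow(4)\Rightarrow(1)$, and finally deduce the minimality characterization from~(3). For existence I pick $x_0\in K$ and form the empirical measures $\mu_N = (S_N\mathbf{1}_K(x_0))^{-1}\sum_{i=-N}^{N-1}\delta_{S^ix_0}$, each Radon with $\mu_N(K)=1$. Using compactness of any $A\in\mathfrak{F}$ together with the hitting assumption---which makes the function $x\mapsto\min\{N: S_N\mathbf{1}_K(x)>0\}$ upper semicontinuous and hence bounded by some $N_A$ on $A$---every visit of the orbit to $A$ lies within $N_A$ steps of a visit to $K$, giving a uniform bound on $\mu_N(A)$. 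Vague precompactness then extracts a subsequential Radon limit $\mu$; the $S$-invariance comes from the telescoping identity $S_Nf(x_0) - S_N(f\circ S)(x_0) = f(S^{-N}x_0) - f(S^Nx_0)$ (bounded for $f$ of compact support, divided by a normalizer that stays $\ge 1$), and $\mu(K)=1$ because $K$ is a clopen continuity set.

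The implication $(1)\Rightarrow(2)$ is immediate from $S_N\mathbf{1}_K(x)\to\infty$: the constants $c=c(A,\epsilon)$ produced by~(1) are Cauchy in $\epsilon$, so they converge to a single $c(A)$, which is the pointwise limit. For $(2)\Rightarrow(3)$ I set $\mu(A)=c(A)$ on $\mathfrak{F}$ (normalizing $\mu(K)=1$); finite additivity follows from $\mathbf{1}_{A\cup B}=\mathbf{1}_A+\mathbf{1}_B$ on disjoint $A,B$, and $S$-invariance from the same telescoping identity. Because any pairwise-disjoint family of compact-open sets whose union is compact open is necessarily finite, $\mu$ is $\sigma$-additive on the ring $\mathfrak{F}$ and extends by Carath\'eodory to a Radon Borel measure with the desired ratio representation. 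For $(3)\Rightarrow(4)$, let $\nu$ be any $S$-invariant Radon measure; Hopf's ratio ergodic theorem, applied to the $\sigma$-finite $\nu$---whose conservativity on its support follows from the hitting structure together with the Radon property---yields that for $\nu$-a.e.\ $x\in K$ the ratio converges to $E_\nu[\mathbf{1}_A\mid\mathcal{I}](x)/E_\nu[\mathbf{1}_K\mid\mathcal{I}](x)$. Since~(3) forces this limit to be the constant $\mu(A)/\mu(K)$ for \emph{every} $x\in K$, integration against $\nu$ on $K$ gives $\nu(A)=(\nu(K)/\mu(K))\mu(A)$, and $\nu$ is proportional to $\mu$.

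For $(4)\Rightarrow(1)$ I argue by contradiction: if~(1) fails there exist $A\in\mathfrak{F}$, $\epsilon>0$, and sequences $x_n\in K$, $N_n$ with $S_{N_n}\mathbf{1}_K(x_n)\to\infty$ whose ratios stay $\epsilon$-far from $\mu(A)/\mu(K)$; the associated empirical measures are vaguely precompact by the earlier bound, any vague cluster $\mu'$ is $S$-invariant with $\mu'(K)=1$, so by~(4) $\mu'=\mu/\mu(K)$, contradicting the separation. The minimality characterization is direct from~(3): if $S$ is minimal then $\bigcup_i S^iA=X$ forces $\mu(A)>0$ for any nonempty $A\in\mathfrak{F}$, while a non-dense orbit avoids some nonempty $A\in\mathfrak{F}$, forcing $S_N\mathbf{1}_A(x)=0$ and hence $\mu(A)=0$ via~(3). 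The main obstacle is the passage $(4)\Rightarrow(1)$ in the non-compact locally compact setting, where vague limits of Radon measures can lose mass at infinity; the crucial safeguard is the normalization by $S_N\mathbf{1}_K$, which pins the limiting mass on the compact clopen $K$ at $1$ and thus prevents collapse to zero. A secondary subtlety is the conservativity input for Hopf's theorem in $(3)\Rightarrow(4)$, which must be squeezed out of the hitting condition together with finiteness of $\nu$ on compact-open sets.
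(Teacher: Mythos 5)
Your overall strategy lines up with the paper's: existence via empirical averages normalized by visits to $K$, the cycle of implications in the same order, ergodic-decomposition/Hopf for $(3)\Rightarrow(4)$, a compactness-extraction contradiction for $(4)\Rightarrow(1)$, and a direct minimality characterization. Two of your choices are actually a bit cleaner than the paper's. For existence, you treat the bounded-orbit and unbounded-orbit cases uniformly by normalizing from the start by $S_N\mathbf{1}_K(x_0)$; the paper splits into two cases (a discrete orbit giving a counting measure, versus a Furstenberg-style subsequence extraction) and your unification is sound because the telescoping term vanishes in both regimes. For the minimality converse, you read $\mu(A)=0$ directly off condition~(3) applied along an orbit that misses $A$ (and, by the standing hypothesis, still meets $K$); the paper instead manufactures a second, singular invariant Radon measure. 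Your route is shorter and avoids an appeal to singularity.

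There is, however, a genuine gap in your $(4)\Rightarrow(1)$ step. Condition~(1) has two halves: (a) $\lim_N S_N\mathbf{1}_K(x)=\infty$ for \emph{every} $x\in X$, and (b) the uniform ratio estimate on $K$. Your contradiction argument addresses only the failure of~(b): you assume sequences $x_n\in K$, $N_n$ with $S_{N_n}\mathbf{1}_K(x_n)\to\infty$ and ratios bounded away from $\mu(A)/\mu(K)$, and extract a subsequential invariant Radon measure contradicting uniqueness. But~(1) can also fail because some $x_0\in X$ visits $K$ only finitely often, and that case produces no such sequences. The paper handles this separately: if $\sharp(\Orb_S(x_0)\cap K)<\infty$ then (by the same boundedness estimate you prove) $\sharp(\Orb_S(x_0)\cap A)<\infty$ for every $A\in\mathfrak{F}$, so $\Orb_S(x_0)$ is discrete and closed and the counting measure on it is an invariant Radon measure; picking a point off $\Orb_S(x_0)$ and running the extraction there yields a second invariant Radon measure singular to the counting measure, contradicting~(4). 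You need to add this case (or argue directly that under~(4) no orbit can have bounded visit count to $K$) to make the implication complete. A secondary, smaller point: in $(3)\Rightarrow(4)$ you write ``integration against $\nu$ on $K$'' when deriving $\nu(A)=(\nu(K)/\mu(K))\mu(A)$; the identity $E_\nu[\mathbf{1}_A\mid\mathcal{I}]=\tfrac{\mu(A)}{\mu(K)}E_\nu[\mathbf{1}_K\mid\mathcal{I}]$ holds $\nu$-a.e.\ on the $\mathcal{I}$-saturation of $K$, which is all of $X$ under conservativity, and it is the integral over $X$ (not over $K$, which is not $\mathcal{I}$-measurable) that returns $\nu(A)$ and $\nu(K)$.
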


\begin{proof}
Let $\B$ denote the Borel $\sigma$\nobreakdash-algebra of $X$. 
To show the first statement, let us first assume the existence of a point 
$x_0 \in X$ such that the sequence $\{S_N{\mathbf 1}_K(x_0)\}_N$ is bounded. 
Let $A \in {\mathfrak F}$. Since $A \subset \bigcup_{i=1}^kS^{n_i}(K)$ with 
some $n_i \in \Z$, we have $\sharp (\Orb_S(x_0) \cap A) < \infty$. We may 
define a countably additive set function $\mu:{\mathfrak F} \to \R_+$ by 
$\mu(A)=\sharp(\Orb_S(x_0) \cap A)$ for $A \in {\mathfrak F}$. 
The set function $\mu$ is uniquely extended to a measure on $\B$, which 
is an $S$\nobreakdash-invariant Radon measure.

Let us then assume $\{S_N{\mathbf 1}_K(x)\}_N$ is unbounded for any $x \in 
X$. In this case, a proof is achieved by following \cite[Section~V]{F}. 
Fix points $\{x_i\}_{i \ge 1} \subset K$ and integers $1 \le N_1 <N_2<N_3<\dots$ 
so that $0 < S_{N_1}{\mathbf 1}_K(x_1) < S_{N_2}{\mathbf 1}_K(x_2) < 
S_{N_3}{\mathbf 1}_K(x_3) < \dots$ Let $A \in {\mathfrak F}$. Take $m \in \N$ 
so that ${\mathbf 1}_A \le S_m {\mathbf 1}_K$.  
Since for every $N \in \N$, 
\begin{align*}
S_N {\mathbf 1}_A & \le S_m(S_N{\mathbf 1}_K) \\
& \le 2m\sum_{j=-N}^{N-1}{\mathbf 1}_K \circ S^j+{\mathbf 1}_K \circ S^{-N-1}+
\{{\mathbf 1}_K \circ S^{-N-1}+{\mathbf 1}_K \circ S^{-N-2}\} \\ 
& \qquad + \dots + \{{\mathbf 1}_K \circ S^{-N-1}+\dots + {\mathbf 1}_K 
\circ S^{-N-m}\} +  {\mathbf 1}_K \circ S^N \\
& \qquad +\{{\mathbf 1}_K \circ S^N+{\mathbf 1}_K \circ S^{N+1}\}+ \dots + 
\{{\mathbf 1}_K \circ S^{N+1}+\dots + {\mathbf 1}_K \circ S^{N+m-2}\} \\
& \le 2m S_N {\mathbf 1}_K + m^2,
\end{align*}
the sequence $\left\{\frac{S_N{\mathbf 1}_A}{S_N{\mathbf 1}_K}\right\}_N$ is bounded 
uniformly on $K$. Since ${\mathfrak F}$ is countable, we may find a sequence 
$\{i_p\}_{p \in \N} \subset \N$ such that 
$\left\{\frac{S_{N_{i_p}}{\mathbf 1}_A(x_{i_p})}
{S_{N_{i_p}}{\mathbf 1}_K(x_{i_p})}\right\}_p$ 
converges for any $A \in {\mathfrak F}$. Let $\mu(A)$ denote the limit. 
Observe $\mu(K)=1$. The set function $\mu:{\mathfrak F} \to \R_+$ has a 
unique extension, denoted by $\mu$ again, to $\B$. 
Since $|S_{N_{i_p}}{\mathbf 1}_{S^{-1}A}(x_{i_p}) - 
S_{N_{i_p}}{\mathbf 1}_A(x_{i_p})| \le 1$ for any pair $(A,p) \in {\mathfrak F} 
\times \N$, it follows that $\mu$ is $S$\nobreakdash-invariant. 

We then see the second statement. It is easy to show the implications: 
$\eqref{uniform} \Rightarrow \eqref{pointwise} \Rightarrow \eqref{measure}$. 
Assume $\eqref{measure}$. Suppose $\nu$ is an $S$\nobreakdash-invariant 
Radon measure. The assumption of the proposition guarantees $\mu(K) \ne 0$ and 
$\nu(K) \ne 0$. By the ergodic decomposition 
(see for example\ \cite[2.2.9]{Aa}), there exist a probability space 
$(\Omega,\lambda)$ and $\sigma$\nobreakdash-finite ergodic measures 
$\{\rho_\omega|\omega \in \Omega\}$ on $X$ such that for any $B \in \B$, 
\begin{enumerate}[(i)]
\item
a function $\Omega \to \R, \omega \mapsto \rho_\omega(B)$ is 
measurable;
\item
$\nu(B) = \int_\Omega \rho_\omega(B) d \lambda(\omega)$.
\end{enumerate}
Since it follows from the ratio ergodic theorem 
(see for example \cite[2.2.1]{Aa}) that for any $B \in \B$, 
\[
\nu(B)=\int_\Omega \frac{\rho_\omega(K)}{\mu(K)}\mu(B) d\lambda(\omega) = 
\frac{\mu(B)}{\mu(K)}\int_\Omega \rho_\omega(K) d\lambda(\omega)=
\frac{\nu(K)}{\mu(K)}\mu(B),
\]
we obtain \eqref{unique}.

Assume \eqref{unique}. Let us show the first half of \eqref{uniform}. To 
the contrary, assume the existence of a point $x_0 \in X$ such that 
$\sharp (\Orb_S(x_0) \cap K) < \infty$. As is seen above, the counting 
measure on $\Orb_S(x_0)$ is an $S$\nobreakdash-invariant Radon measure. 
Observe $\overline{\Orb_S(x_0)}=\Orb_S(x_0)$. 
In view of \eqref{unique}, $\sharp (\Orb_S(x) \cap K) = \infty$ for any 
$x \in X \setminus \Orb_S(x_0)$, which yields an $S$\nobreakdash-invariant 
Radon measure singular to the counting measure. Hence, $S$ has singular 
invariant Radon measures, which contradicts \eqref{unique}. 
To show the second half of \eqref{uniform}, we shall see the implication: 
\eqref{uniform} $\Rightarrow c = \frac{\mu(A)}{\mu(K)}$, as follows. 
Assume \eqref{uniform}. Let $\mu$ denote the unique invariant Radon measure. 
There exist a probability 
space $(\Omega,\lambda)$ and $\sigma$\nobreakdash-finite ergodic measures 
$\{\rho_\omega|\omega \in \Omega\}$ such that 
$\mu(B)=\int_\Omega \rho_\omega(B) d \lambda(\omega)$. 
Since $\rho_\omega$ is a Radon measure, it follows that 
$\rho_\omega = c_\omega \mu$ with a constant $c_\omega$. It follows therefore 
that $\mu$ is ergodic, so that $c$ in \eqref{uniform} must equal 
$\frac{\mu(A)}{\mu(K)}$. Then, assume \eqref{uniform} is not the case under 
\eqref{unique}. There exist $A \in {\mathfrak F}$, $\epsilon > 0$, 
$\{x_m\}_{m \in \N} \subset K$ and $\{N_m\}_{m \in \N} \subset \N$ such that 
for each $m \in \N$, $S_{N_m}{\mathbf 1}_K(x_m) \ge m$ and 
$\left| \frac{S_{N_m} {\mathbf 1}_A(x_m)}{S_{N_m} {\mathbf 1}_K(x_m)}- 
\frac{\mu(A)}{\mu(K)}\right| \ge \epsilon$. 
As in the second paragraph, we may find an $S$\nobreakdash-invariant Radon 
measure $\nu$ such that 
$\left|\frac{\nu(A)}{\nu(K)} - \frac{\mu(A)}{\mu(K)}\right| \ge \epsilon$. 
This contradicts~\eqref{unique}.

Let us see the last statement. If $S$ is minimal, then 
$\bigcup_{i \in \Z}S^iA=X$ for any nonempty open set $A \subset X$, 
and hence, $\mu(A)>0$. Conversely, suppose any nonempty open set 
has a positive measure. If the orbit\nobreakdash-closure of some point is a 
proper subset of $X$, then we may find an invariant Radon measure which is 
singular with respect to $\mu$.  This is a contradiction. 
\end{proof}

\section{Uniform partitions}\label{constructions}

In this section, we first introduce the concept of uniformity for measurable 
sets and finite partitions, respectively. We then show that there exist so 
many uniform partitions as they generate the $\sigma$\nobreakdash-algebra $\C$.

\begin{definition}\label{uniform_infi}
A set $C \in \C_0$ is said to be {\em uniform} relative to a set $K \in \C_0$ 
if for any $\epsilon > 0$, there exists $m \in \N$ such that for a.e.\ $y \in K$, 
\begin{equation*}\label{sup_converg}
T_N{\mathbf 1}_K(y) \ge m \Rightarrow \left|\frac{T_N{\mathbf 1}_C(y)}
{T_N{\mathbf 1}_K(y)} - \frac{\nu(C)}{\nu(K)}\right|<\epsilon.
\end{equation*}

A finite partition $\alpha$ of $Y$ is said to be {\em uniform} relative to 
$K$ if all the sets in $\bigcup_{n \in \N} \alpha_{-n}^{n-1}$ of finite measure 
are uniform relative to $K$. 
\end{definition}

\begin{lemma}\label{symbolic_unique}
Let $\alpha$ be a finite partition of $Y$. Then, the following are equivalent:
\begin{enumerate}
\item
$(X_\alpha,S_\alpha)$ is a locally compact Cantor minimal system admitting a 
unique, up to scaling, invariant Radon measure;
\item
$\alpha$ is uniform relative to $K_\alpha$, and 
$\sharp(\Orb_{S_\alpha}(x) \cap K_{\phi_\alpha(\alpha)})=\infty$ for all 
$x \in X_\alpha$.
\end{enumerate}
\end{lemma}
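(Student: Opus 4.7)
The plan is to apply Proposition~\ref{existence} to the symbolic locally compact Cantor system $(X_\alpha,S_\alpha)$ with the compact open set $K := K_{\phi_\alpha(\alpha)}=\{x\in X_\alpha : x_0\ne 1\}$. The essential dictionary is as follows: a compact open subset of $X_\alpha$ is a finite disjoint union of cylinders $[u.v]$ with $uv\ne 1^{|uv|}$, and each such cylinder corresponds via $\phi_\alpha^{-1}$ to a finite-measure atom $C\in\alpha_{-|u|}^{|v|-1}$ with $\hat{\mu}_\alpha([u.v])=\nu(C)$; moreover, for $\nu$-a.e.\ $y\in Y$,
\[
T_N\mathbf{1}_C(y) = (S_\alpha)_N\mathbf{1}_{[u.v]}(\phi_\alpha(y)), \qquad T_N\mathbf{1}_{K_\alpha}(y) = (S_\alpha)_N\mathbf{1}_K(\phi_\alpha(y)).
\]

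For $(1)\Rightarrow(2)$ I would argue as follows. Minimality of $S_\alpha$ ensures that the preliminary hypothesis of Proposition~\ref{existence} is met, and uniqueness of the invariant Radon measure is its condition~\eqref{unique}, hence \eqref{uniform} is at our disposal. The first half of \eqref{uniform} reads $\lim_N(S_\alpha)_N\mathbf{1}_K(x)=\infty$ for every $x$, which is exactly $\sharp(\Orb_{S_\alpha}(x)\cap K)=\infty$. For any finite-measure atom $C\in\alpha_{-n}^{n-1}$, I would apply the second half of \eqref{uniform} to the corresponding cylinder $A=[u.v]$; the chain $\eqref{uniform}\Rightarrow\eqref{measure}$ forces the constant $c$ to equal $\hat{\mu}_\alpha(A)/\hat{\mu}_\alpha(K)=\nu(C)/\nu(K_\alpha)$, and pulling back along $\phi_\alpha$ yields the desired uniformity of $C$ relative to $K_\alpha$.

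For $(2)\Rightarrow(1)$, the orbit hypothesis supplies both the preliminary hypothesis of Proposition~\ref{existence} and the first half of \eqref{uniform}. Since the second half of \eqref{uniform} is stable under finite disjoint unions of compact open sets (by summing ratios and rescaling $\epsilon$), it suffices to verify it for a single finite-measure cylinder $A=[u.v]$ with $C=\phi_\alpha^{-1}(A)$. The uniformity of $\alpha$ then provides, for each $\epsilon>0$, an integer $m$ such that the symbolic bad set
\[
B_N := \bigl\{x\in K : (S_\alpha)_N\mathbf{1}_K(x)\ge m,\ \bigl|(S_\alpha)_N\mathbf{1}_A(x)/(S_\alpha)_N\mathbf{1}_K(x) - \nu(C)/\nu(K_\alpha)\bigr|\ge\epsilon\bigr\}
\]
has $\phi_\alpha$-preimage of $\nu$-measure zero, hence $\hat{\mu}_\alpha(B_N)=0$ for every $N$. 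Since each $B_N$ is clopen in $K$, has finite $\hat{\mu}_\alpha$-measure, and $\hat{\mu}_\alpha$ charges every nonempty open subset of $X_\alpha$, I conclude that $B_N=\emptyset$ for all $N$. This verifies \eqref{uniform} and so Proposition~\ref{existence} delivers the unique invariant Radon measure, while its final statement, together with the positivity of $\hat{\mu}_\alpha$ on nonempty opens, gives minimality of $S_\alpha$. The main obstacle I anticipate is precisely this a.e.-to-everywhere upgrade, which is handled entirely by the clopenness of the bad set and the positivity of $\hat{\mu}_\alpha$ on nonempty open subsets of finite measure.
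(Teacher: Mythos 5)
Your proof is correct and is, in substance, the same as the paper's: the paper's entire proof is the sentence ``This follows from Proposition~\ref{existence},'' and your argument is a careful unwinding of that reference, identifying $K=K_{\phi_\alpha(\alpha)}$ as the reference compact open set and translating condition~\eqref{uniform} of the proposition back and forth along $\phi_\alpha$. You also correctly isolate and resolve the only genuinely subtle step that the paper leaves implicit: Definition~\ref{uniform_infi} is an almost-everywhere statement on $Y$, whereas Proposition~\ref{existence}\eqref{uniform} is a pointwise statement on $X_\alpha$, and the upgrade is legitimate precisely because the bad set is clopen (the functions $(S_\alpha)_N\mathbf{1}_A$ and $(S_\alpha)_N\mathbf{1}_K$ are locally constant integer-valued) and $\hat{\mu}_\alpha$ charges every nonempty open set.
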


\begin{proof}
This follows from Proposition~\ref{existence}. 
\end{proof}

Let $\alpha = \{A_1,A_2,\dots,A_N\}$ and $\beta=\{B_1,B_2,\dots,B_N\}$ be 
finite partitions of $Y$. We say that {\em the $\alpha$ distribution is within 
$\delta > 0$ of the $\beta$ distribution} if for every integer $i$ with 
$1 < i \le N$, 
\[
\left|\frac{\nu(A_i)}{\nu(K_{\alpha})} - \frac{\nu(B_i)}{\nu(K_{\beta})}\right| 
< \delta. 
\]
Let $w$ denote the $\alpha$\nobreakdash-name of a section 
$\{T^jy|0 \le j < N\}$. We say that on the section, {\em the 
$\alpha$ $(2n-1)$\nobreakdash-block distribution is within $\delta > 0$ of the 
$(\beta)_{-n+1}^{n-1}$ distribution} if for any word 
$v \in \L(\alpha) \setminus \{1^{2n-1}\}$ of length $2n-1$,
\[
\left|\frac{\sharp \{1 \le j \le N|w_{(j-n,j+n)} = v \}}
{\sharp\{1 \le j \le N|w_j \ne 1 \}} - 
\frac{\nu\left(\bigcap_{i=1}^{2n-1}T^{-(i-1)} 
B_{v_i}\right)}{\nu(K_{\beta})}\right| < \delta. 
\]

\begin{lemma}\label{initial_partition}
Let $\alpha_0$ be a finite partition of $Y$. Let $0 < \epsilon < 1$. 
Then, there exists a finite partition $\alpha$ of $Y$ such that 
\begin{enumerate}
\item
$d(\alpha_0,\alpha)<\epsilon$;
\item
$\alpha$ is uniform relative to $K_{\alpha}$;
\item\label{infinite_often}
$\sharp(\Orb_{S_\alpha}(x) \cap K_{\phi_\alpha(\alpha)})=\infty$ for all 
$x \in X_\alpha$.
\end{enumerate}
\end{lemma}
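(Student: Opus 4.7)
Set $K = K_{\alpha_0}$. The plan splits into two phases: first construct a uniform partition $\alpha$ that is $d$-close to $\alpha_0$; then separately produce a refining sequence of $K_\alpha$-standard towers satisfying the hypotheses of Lemma~\ref{often_lemma} so that (3) follows.

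\textbf{Phase 1 (constructing $\alpha$).} Realize $\alpha$ as a $d$-limit of a sequence $\{\alpha_k\}_{k\ge 0}$ of $N$-atom partitions, all sharing the infinite-measure atom $A_1 = Y\setminus K$ (so $K_{\alpha_k}=K$ for every $k$), produced along a refining sequence $\{{\mathfrak s}_k\}_{k\ge 1}$ of $K$-standard towers. Fix $\epsilon_k \in (0,1)$ with $\sum_k\epsilon_k<\epsilon/2$ and integers $n_k\uparrow\infty$. For $k=1$, apply Proposition~\ref{tower_lemma2} to each of the finitely many cylinders $\bigcap_{i=1}^{2n_1-1}T^{-(i-1)}A_{v_i}$ of finite measure, combined with Lemma~\ref{tower_lemma1}, to obtain a $K$-standard tower ${\mathfrak s}_1$ of principal heights large enough that, in each principal column, the refinement by $\alpha_0$ produces sub-columns whose $\alpha_0$-names carry the correct $K$-block-frequencies for every word of length $\le 2n_1-1$ on a fiber-set of $K$-mass at least $\nu(K)-\epsilon_1$. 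For each principal column ${\mathfrak c}$ of ${\mathfrak s}_1$, pick a canonical name $w^{({\mathfrak c})}$ from a good sub-column and re-paint the levels of each other sub-column so every fiber in ${\mathfrak c}$ reads $w^{({\mathfrak c})}$. Since atom assignments change only on bad sub-columns, whose total $K$-mass is $\le \epsilon_1$, one has $d(\alpha_0,\alpha_1)<2\epsilon_1$. Iterate: at step $k\ge 2$, apply Proposition~\ref{tower_lemma2} to all finite-measure $\alpha_{k-1}$-cylinders of length $\le 2n_k-1$, and Lemma~\ref{desired_refine}, to obtain ${\mathfrak s}_k$ refining ${\mathfrak s}_{k-1}$ with $m_k\ge k$; the analogous re-painting gives $\alpha_k$ with $d(\alpha_{k-1},\alpha_k)<2\epsilon_k$. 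By summability, $\alpha:=\lim_k\alpha_k$ exists in $d$ and satisfies $d(\alpha_0,\alpha)<\epsilon$, yielding (1).

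For uniformity (assertion (2)), fix $C\in\alpha_{-n}^{n-1}$ of finite measure and $\tilde\epsilon>0$. Choose $k$ so that $n_k>n$, $\epsilon_k<\tilde\epsilon/4$ and $2\sum_{j>k}\epsilon_j<\tilde\epsilon/4$. Every fiber of every principal column of ${\mathfrak s}_k$ has $\alpha_k$-name with $K$-block-frequency of the word $v$ defining $C$ within $\epsilon_k$ of $\nu(C_k)/\nu(K)$, where $C_k$ is the $\alpha_k$-analogue of $C$. A standard window argument converts this into a uniform bound $|T_N{\mathbf 1}_{C_k}(y)/T_N{\mathbf 1}_K(y)-\nu(C_k)/\nu(K)|<\tilde\epsilon/2$ for $y\in K$ with $T_N{\mathbf 1}_K(y)$ above a threshold depending only on $k$ (the fiber-boundary error is $O(n/m_k)$); the $d$-tail $\alpha-\alpha_k$ contributes at most $\tilde\epsilon/2$ to the discrepancy between $C$ and $C_k$ as well as to the empirical ratios, which together deliver the uniform estimate required by Definition~\ref{uniform_infi}.

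\textbf{Phase 2.} With $\alpha$ in hand, apply Lemma~\ref{tower_lemma1} and refine the resulting tower by $\alpha$, producing a $K_\alpha$-standard tower ${\mathfrak t}_1$ that refines $\alpha$ as a partition. Iterate Lemma~\ref{desired_refine} to construct a refining sequence $\{{\mathfrak t}_k\}_{k\ge 1}$ of $K_\alpha$-standard towers with $m_k\to\infty$. Lemma~\ref{often_lemma} then yields (3). The principal obstacle, I expect, is the combinatorial inductive step of Phase~1: within each ${\mathfrak s}_k$-principal column one must verify that re-painting only the bad sub-columns by the selected canonical name genuinely achieves correct $(2n_k-1)$-block statistics while costing at most $\epsilon_k$ in total $K$-mass. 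This is the infinite-measure counterpart of the Hansel--Raoult block-averaging argument, and it forces $n_k$ to grow sufficiently slowly relative to $m_k$.
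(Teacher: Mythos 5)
Your two-phase plan diverges from the paper in a way that introduces a genuine gap. The paper's proof of this lemma deliberately \emph{shrinks} the finite-measure part at each stage: in Step~1 it replaces the $\alpha_0$\nobreakdash-name of every level in a bad column by $1$ and unites those levels with the infinite level, so $K_{\alpha_n} \subsetneq K_{\alpha_{n-1}}$ strictly whenever some column is entirely bad. You instead insist that $K_{\alpha_k}=K$ stays fixed throughout, and you dispose of bad fibers by copying the $\alpha_0$\nobreakdash-name of a good sub\nobreakdash-column onto them. That copy\nobreakdash-a\nobreakdash-good\nobreakdash-name device is the one the paper uses in Lemma~\ref{refining_uniform}, and there it only works because the base partition $\beta$ is \emph{already assumed uniform}: when a principal column $\mathfrak c$ contains no good sub\nobreakdash-column, the paper explicitly leaves its names alone and controls the contribution of such columns through the uniformity of the set $R$ of bad levels relative to $K_\beta$. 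In the present lemma there is no uniform base partition to fall back on, and your proposal does not say what to do with a column $\mathfrak c$ all of whose sub\nobreakdash-columns are bad. You cannot import a good name from a different column: the positions of $K$\nobreakdash-levels in $\mathfrak c$ and in $\mathfrak c'$ differ, so re\nobreakdash-painting by a name of the wrong $K$\nobreakdash-profile would assign finite\nobreakdash-measure\nobreakdash-atom indices to levels outside $K$ and thereby change $K_{\alpha_1}$ after all, defeating your hypothesis. Entirely bad columns do occur (Proposition~\ref{tower_lemma2} only gives a $\nu(K)-\epsilon$ fiber cover, not a guarantee per column), so this is not a removable corner case. The paper's merge\nobreakdash-into\nobreakdash-the\nobreakdash-infinite\nobreakdash-level step is exactly what resolves it, at the cost of letting $K_{\alpha_n}$ decrease.

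A secondary issue is your Phase~2 claim that iterating Lemma~\ref{desired_refine} alone yields a refining sequence with $m_k\to\infty$. Lemma~\ref{desired_refine} controls column heights, not the minimum number of $K$\nobreakdash-levels per principal column, and a tall column may still hit $K$ only once. In the paper this is handled because bad fibers (in particular those with too few $K$\nobreakdash-hits, via the $\sum_i{\mathbf 1}_K(T^iy)\ge M$ clause of Proposition~\ref{tower_lemma2}) are converted to label $1$ and expelled from $K_{\alpha_n}$; the surviving principal columns therefore automatically carry at least $m_k$ levels inside $K_{\alpha_n}$. Because your $K$ is fixed, you cannot expel those fibers without violating $K$\nobreakdash-standardness, so you would need a separate argument for $m_k\to\infty$, which the proposal does not supply. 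Both gaps stem from the same source: you have transplanted the mechanism of Lemma~\ref{refining_uniform} into a setting where its enabling hypothesis (a uniform $\beta$) is unavailable, whereas the paper's proof for this lemma accepts a shrinking $K_{\alpha_n}$ precisely to avoid needing that hypothesis.
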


\begin{proof}

By constructing inductive steps, we will show that there exist finite partitions 
$\{\alpha_n\}_{n \in \N}$ of $Y$, $K_{\alpha_n}$\nobreakdash-standard towers 
${\mathfrak t}_k(n)$, $n \in \N$, $1 \le k \le n$, and 
integers $\hat{N}_n \ge 1$ which satisfy the following properties. 
\begin{enumerate}[(i)]
\item
$K_{\alpha_n} \subset K_{\alpha_{n-1}}$ for every $n \in \N$.
\item
For every pair $(n,k) \in \N \times \{1,2,\dots, n\}$, 
\begin{enumerate}
\item
$d((\alpha_n)_{-k+1}^{k-1},(\alpha_{n-1})_{-k+1}^{k-1}) < \frac{\epsilon}{2^n}$;
\item
${\mathfrak t}_k(n)$ refines $\alpha_n$ as partitions;
\item
${\mathfrak t}_{k+1}(n)$ refines ${\mathfrak t}_k(n)$ if $1 \le k < n$; 
\item
$B_k(n) \subset B_k(n+1)$ and $\nu(B_k(n+1) \setminus B_k(n)) < 
\frac{\epsilon}{2^{n+1}}$, where $B_k(n)=B({\mathfrak t}_k(n))$;
\item\label{good_distri}
on any fiber of each principal column of ${\mathfrak t}_n(n)$, 
the $\alpha_n$ $(2n-1)$\nobreakdash-block distribution is within 
$\frac{\epsilon}{2^n c_n}$ of the $(\alpha_n)_{-n+1}^{n-1}$ distribution, where $c_1=1$ and 
$c_n=(\sharp \alpha_0)^{2n-1}$ for $n \ge 2$.
\end{enumerate}
\item
There exist $m_k,M_k \in \N$ such that if $(n,k) \in \N \times \{1,2,\dots,n\}$ 
and if ${\mathfrak c}$ is a principal column of ${\mathfrak t}_k(n)$, then 
$m_k \le \sharp \{A \subset K_{\alpha_n}|A \in {\mathfrak c}\} \le M_k$ 
and $m_{k+1} > M_k$.

\end{enumerate}

{\bf Step~1:} 
Take $0 < \delta_1 < \frac{\epsilon}{2^2}$ so 
that if $d(\alpha_0,\beta) < \delta_1$ then the $\beta$ distribution is 
within $\frac{\epsilon}{2^2}$ of the $\alpha_0$ distribution. 
Applying Proposition~\ref{tower_lemma2}, one may find $N_1 \in \N$ 
such that if the height of each principal column of a 
$K_{\alpha_0}$\nobreakdash-standard tower is at least $N_1$ then those fibers 
of the tower, on which the $\alpha_0$ 1\nobreakdash-block distribution is within 
$\delta_1$ of the $\alpha_0$ distribution, cover at least 
$\nu(K_{\alpha_0})-\delta_1$ of $K_{\alpha_0}$. 

Using Lemma~\ref{tower_lemma1}, we may build a 
$K_{\alpha_0}$\nobreakdash-standard tower ${\mathfrak t}^\prime_1(1)$ all of 
whose principal columns have heights $N_1$ or $N_1 + 1$. By refining 
${\mathfrak t}^\prime_1(1)$ according to $\alpha_0$ if necessary, we may assume 
all the fibers on a column of ${\mathfrak t}^\prime_1(1)$ have the same 
$\alpha_0$\nobreakdash-name. Replace the $\alpha_0$\nobreakdash-name of any 
level in each {\em bad column} with $1$. The bad column means a 
principal column on which 
$\alpha_0$ $1$\nobreakdash-block distribution is {\em not} within $\delta_1$ of 
the $\alpha_0$ distribution. Unite all the levels of the bad columns with an 
infinite level of ${\mathfrak t}^\prime_1(1)$ into a new level. 
The change of $\alpha_0$\nobreakdash-names gives us a new finite 
partition $\alpha_1$ and yields 
$d(\alpha_0,\alpha_1) < \delta_1 < \frac{\epsilon}{2^2}$. 
The manipulation of uniting levels gives us a new 
$K_{\alpha_1}$\nobreakdash-standard tower ${\mathfrak t}_1(1)$. 
On the fiber of each principal column of ${\mathfrak t}_1(1)$, the $\alpha_1$ 
$1$\nobreakdash-block distribution is within $\delta_1$ of the $\alpha_0$ 
distribution. This together with $d(\alpha_0,\alpha_1) < \delta_1$ shows 
that \eqref{good_distri} holds with $n=1$. This allows us to find 
$\hat{N_1} \in \N$ such that if a section has at least $\hat{N}_1$ points in 
$K_{\alpha_1}$ then $\alpha_1$ $1$\nobreakdash-block distribution on the 
section is within $\frac{\epsilon}{2}$ of the $\alpha_1$ distribution. Since 
the following construction will guarantee that the 
$\alpha_n$\nobreakdash-name of any fiber of every tower 
${\mathfrak t}_1(n)$ with $n \ge 2$ will coincide with the 
$\alpha_1$\nobreakdash-name of a fiber of ${\mathfrak t}_1(1)$, any section 
having at least $\hat{N}_1$ points in $K_{\alpha_n}$ will have the 
$\alpha_n$ $1$\nobreakdash-block distribution within 
$\frac{\epsilon}{2}$ of the $\alpha_1$ distribution.

{\bf Step~2:} Fix a real number 
$\delta_2$ with $0 < \delta_2(N_1+1) < \frac{\epsilon}{2^3 c_2}$ so that 
if $d((\alpha_1)_{-1}^1,\beta) < 5 \delta_2$ then the $\beta$ distribution 
is within $\frac{\epsilon}{2^3 c_2}$ of the $(\alpha_1)_{-1}^1$ distribution 
and so that if $d(\alpha_1,\beta) < \delta_2$ then the $\beta$ 
distribution is within $\frac{\epsilon}{2^2}$ of the $\alpha_1$ distribution. 
Put 
\[
M_1=\max\{ \sharp \{A \in {\mathfrak c}|A \subset K_{\alpha_1}\}| 
{\mathfrak c} \textrm{ is a principal column of } {\mathfrak t}_1(1)\}. 
\]
Fix $m_2 > M_1$ with $\frac{\nu(K_{\alpha_1})}{m_2} < \delta_2$. 
Applying Proposition~\ref{tower_lemma2}, one may find $N_2 \in \N$ such that if 
the height of each principal column of a 
$K_{\alpha_1}$\nobreakdash-standard tower is at least $N_2$ then those fibers 
of the tower, on which the $\alpha_1$ $3$\nobreakdash-block distribution is 
within $\delta_2$ of $(\alpha_1)_{-1}^1$ distribution, cover at least 
$\nu(K_{\alpha_1}) - \delta_2$ of $K_{\alpha_1}$ and, in addition, each of 
the good fibers includes at least $m_2$ points in $K_{\alpha_1}$. 
In fact, if an atom $A \in (\alpha_1)_{-1}^1$ of finite measure is not 
included in $K_{\alpha_1}$, then, instead of $A$ itself, we have to apply the 
proposition to one of $TA$ and $T^{-1}A$ included in $K_{\alpha_1}$.

In virtue of Lemma~\ref{desired_refine}, we may build a 
$K_{\alpha_1}$\nobreakdash-standard tower ${\mathfrak t}^\prime_2(2)$ with base 
$B_2^\prime(2) \subset B_1(1)$ such that the height of each principal column of 
${\mathfrak t}_2^\prime(2)$ is between $N_2$ and $N_2+4(N_1+1)$. We may assume 
that all the fibers on a column of ${\mathfrak t}^\prime_2(2)$ have the same 
$\alpha_1$\nobreakdash-name. Replace with $1$ the $\alpha_1$\nobreakdash-name 
of any level of any bad column of ${\mathfrak t}^\prime_2(2)$. This change of 
names gives us a new finite partition $\alpha_2$ and guarantees 
$d(\alpha_1,\alpha_2)<\delta_2$. Then, unite all the levels in the 
bad columns with an infinite level of ${\mathfrak t}^\prime_2(2)$ into a new 
level. This change 
of ${\mathfrak t}_2^\prime(2)$ results a new $K_{\alpha_2}$\nobreakdash-standard 
tower ${\mathfrak t}_2(2)$. Put $P_2(2)=B_2(2) \cap K_{{\mathfrak t}_2(2)}$.
We then have $\nu(P_2(2))m_2 \le \nu(K_{\alpha_1})$. 
Counting the case where $T$ or $T^2$ makes subsets of an atom of 
$(\alpha_2)_{-1}^1$ of finite measure go through the top to the base of 
${\mathfrak t}_2(2)$, we may see that 
\begin{equation}\label{2-blocks}
d((\alpha_1)_{-1}^1,(\alpha_2)_{-1}^1)<\delta_2 + 4\nu(P_2(2))<\delta_2 + 
\frac{4\nu(K_{\alpha_1})}{m_2} < 5 \delta_2. 
\end{equation}
Decomposing the principal columns of ${\mathfrak t}_2(2)$ into subcolumns of 
columns of ${\mathfrak t}_1(1)$ and putting down the subcolumns, we obtain a 
new $K_{\alpha_2}$\nobreakdash-standard tower ${\mathfrak t}_1(2)$. In fact, 
this construction of ${\mathfrak t}_1(2)$ has to be executed so that different 
columns of ${\mathfrak t}_1(2)$ are subcolumns of different columns of 
${\mathfrak t}_1(1)$. This additional manipulation is done by uniting 
subcolumns whose bases are included in the base of a column of 
${\mathfrak t}_1(1)$. 
Since $B_1(2)$ is the union of $B_1(1)$ and the bad columns of 
${\mathfrak t}_2^\prime(2)$, we have 
$\nu(B_1(2) \setminus B_1(1)) < \delta_2(N_1+1) < \frac{\epsilon}{2^2}$. 
On the fiber of each principal column of  ${\mathfrak t}_2(2)$, 
the $\alpha_2$ $3$\nobreakdash-block distribution is within $\delta_2$ of the 
$(\alpha_1)_{-1}^1$ distribution. This together with \eqref{2-blocks} 
implies that \eqref{good_distri} holds with $n=2$. Thus, there 
exists $\hat{N_2} \in \N$ such that any section having 
at least $\hat{N}_2$ points in $K_{\alpha_2}$ has the $\alpha_2$ 
$3$\nobreakdash-block distribution within $\frac{\epsilon}{2^2c_2}$ of the 
$(\alpha_2)_{-1}^1$ distribution, and hence, has the $\alpha_2$ 
$1$\nobreakdash-block distribution within $\frac{\epsilon}{2^2}$ of the 
$\alpha_2$ distribution. Since the $\alpha_n$\nobreakdash-name of any 
fiber of any tower ${\mathfrak t}_n(2)$ with $n \ge 3$ will coincide with 
the $\alpha_2$\nobreakdash-name of a fiber of ${\mathfrak t}_2(2)$, 
any section having at least $\hat{N}_2$ points in $K_{\alpha_n}$ will 
have the $\alpha_n$ $1$\nobreakdash-block (resp.\ $3$\nobreakdash-block) 
distribution within $\frac{\epsilon}{2^2}$ of the $\alpha_2$ 
(resp.\ $(\alpha_2)_{-1}^1$) distribution.

In the following steps, repeating arguments of Step~2 with suitably 
arranged parameters, we may obtain the desired sequences 
$\{\alpha_n\}_n$, ${\mathfrak t}_k(n)$ and 
$\{\hat{N}_n\}_n$. There exist a finite partition $\alpha$ of $Y$ such that 
$\lim_{n \to \infty}d(\alpha,\alpha_n)=0$. 
Then, $d(\alpha_0,\alpha) \le \sum_{i=0}^\infty d(\alpha_i,\alpha_{i+1}) \le 
\frac{\epsilon}{2} < \epsilon$. 
For each $k \in \N$, there exists $B_k \in \C$ such that $B_k(n) \uparrow B_k$ 
as $n \to \infty$. Since for each pair $(n,k) \in \N \times \{1,2,\dots,n\}$, 
${\mathfrak t}_k(n+1)$ is obtained from ${\mathfrak t}_k(n)$ by uniting 
subcolumns of ${\mathfrak t}_k(n)$ with the infinite level of 
${\mathfrak t}_k(n)$ into a new level, it follows that for any pair 
$(n_0,k) \in \N \times \{1,2,\dots,n_0\}$ and for any level $F_{n_0} \in 
{\mathfrak t}_k(n_0)$ of finite measure, there exists a unique sequence 
$\{F_n \in {\mathfrak t}_k(n)|n \ge n_0\}$ of decreasing levels. If 
$\bigcap_{n \ge n_0}F_n \ne \emptyset$, then the intersection works as a level 
of a tower ${\mathfrak t}_k$ with base $B_k$. The infinite level of 
${\mathfrak t}_k$ is the union over $n$ of the infinite levels of 
${\mathfrak t}_k(n)$. Since for any pair $(n,k) \in \N \times \{1,2,\dots,n\}$, 
the $\alpha$\nobreakdash-name of any fiber of ${\mathfrak t}_k$ 
coincides with the $\alpha_n$\nobreakdash-name of a fiber of 
${\mathfrak t}_k(n)$, we can show the uniformity of $\alpha$ as follows. Let 
$\delta > 0$ and $k \in \N$. Choose an 
integer $n \ge k$ so that $\frac{\epsilon}{2^{n-1}} < \delta$. Then, in virtue of 
\eqref{good_distri}, any section having at least $\hat{N}_n$ points in $K_\alpha$ has 
the $(2k-1)$\nobreakdash-distribution within $\frac{\epsilon}{2^n}$ of the 
$(\alpha_n)_{-k+1}^{k-1}$ distribution. This together with 
$d((\alpha_n)_{-k+1}^{k-1},(\alpha)_{-k+1}^{k-1})< \frac{\epsilon}{2^n}$ 
leads to the fact that 
any set in $(\alpha)_{-k+1}^{k-1}$ of finite measure is uniform relative to 
$K_\alpha$; it might be necessary to choose again $n$ so larger that the 
$(\alpha_n)_{-k+1}^{k-1}$ distribution is sufficiently close to the 
$(\alpha)_{-k+1}^{k-1}$ distribution. 

Since $K_\alpha \subset K_{\alpha_n} \subset K_{{\mathfrak t}_k(n)}$ for any 
pair $(n,k) \in \N \times \{1,2,\dots,n\}$, we have $K_\alpha \subset 
\bigcap_{n=1}^\infty K_{{\mathfrak t}_k(n)}=K_{{\mathfrak t}_k}$ for any $k \in 
\N$. In view of 
the definition of ${\mathfrak t}_k$, it is not hard to see that 
${\mathfrak t}_{k+1}$ refines ${\mathfrak t}_k$ for each $k \in \N$, and that 
the number of those levels in a principal column of ${\mathfrak t}_k$ which are 
included in $K_\alpha$ is between $m_k$ and $M_k$. In particular, 
$\{{\mathfrak t}_k\}_k$ is a refining sequence of 
$K_\alpha$\nobreakdash-standard towers. Then, Property~\eqref{infinite_often} 
follows from Lemma~\ref{often_lemma}. This completes the proof. 
\end{proof}

Given finite partitions $\alpha$ and $\beta$ of $Y$, we write 
$\alpha \succcurlyeq \beta$ if $\alpha$ is finer than $\beta$ and if $K_\alpha = 
K_\beta$. 

\begin{lemma}\label{refining_uniform}
Suppose a finite partition $\beta$ is uniform relative to $K_\beta$. 
Suppose a finite partition $\alpha_0$ is such that 
$\alpha_0 \succcurlyeq \beta$. Let $0 < \epsilon < 1$. Then, there exists a 
uniform partition $\alpha \succcurlyeq \beta$ relative to $K_\alpha$ which 
satisfies $d(\alpha_0,\alpha) < \epsilon$. 
\end{lemma}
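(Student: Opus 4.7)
I would mirror the inductive construction in the proof of Lemma~\ref{initial_partition}, with one essential modification in the ``bad column correction'' step so that the refinement relation to $\beta$ is preserved. The plan is to produce finite partitions $\{\alpha_n\}_{n\in\N}$ with $\alpha_n \succcurlyeq \beta$, $d(\alpha_{n-1},\alpha_n) < \epsilon/2^n$, and an inductively growing block-distribution property, so that $\alpha := \lim_n \alpha_n$ exists in $d$, satisfies $\alpha \succcurlyeq \beta$ (because $K_{\alpha_n}=K_\beta$ for every $n$), and is uniform relative to $K_\alpha=K_\beta$.

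At the $n$-th step, starting from $\alpha_{n-1}\succcurlyeq\beta$, I would build by Lemmas~\ref{tower_lemma1} and \ref{desired_refine} a very tall $K_\beta$-standard tower ${\mathfrak t}_n$ that refines $\alpha_{n-1}$ (hence also refines $\beta$) and group its principal columns by $\beta$-name $v$. In the proof of Lemma~\ref{initial_partition} the ``bad'' columns (those on whose fibers the $(2n-1)$-block distribution of $\alpha_{n-1}$ is far from the target) were absorbed into the infinite level by relabeling their names to $1$; here that is forbidden, since it would enlarge $K_\alpha$. Instead, inside each $\beta$-equivalence class $\mathcal{C}_v$ of columns, I would redistribute the $\alpha_{n-1}$-refinement of $v$: partition the base of every $C\in\mathcal{C}_v$ into pieces of measures $\nu(B(C))\cdot p^*(w\mid v)$, one for each $\alpha_{n-1}$-refinement $w$ of $v$, where $p^*(w\mid v)$ is the ratio of $\nu$-measures of the relevant $(\alpha_{n-1})_{-n+1}^{n-1}$-atoms sitting inside the corresponding $(\beta)_{-n+1}^{n-1}$-atom. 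Since every $w$ refines $v$, the new partition $\alpha_n$ still satisfies $\alpha_n\succcurlyeq\beta$, and after redistribution every fiber of every principal column of ${\mathfrak t}_n$ exhibits the \emph{same} (target) $(2n-1)$-block distribution. The uniformity of $\beta$, combined with Proposition~\ref{tower_lemma2} applied to each atom of $(\alpha_{n-1})_{-n+1}^{n-1}$ of finite measure, ensures that the empirical $\alpha_{n-1}$-distribution already coincides with $p^*(\cdot\mid v)$ to within a small $\delta_n$ on a sub-tower of measure at least $\nu(K_\beta)-\delta_n$, so the cost $d(\alpha_{n-1},\alpha_n)$ can be kept below $\epsilon/2^n$ by choosing $\delta_n$ small and ${\mathfrak t}_n$ tall.

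The main obstacle is bookkeeping: the target proportions $p^*(w\mid v)$ and the parameters $\delta_n,\hat N_n$ must be chosen at each stage so that the uniformity statements survive the passage to the $d$-limit (on longer and longer blocks), and so that the small discrepancies aggregated over all $\beta$-equivalence classes really do sum to at most $\epsilon/2^n$. Once this is arranged exactly as in Steps~1--2 of the proof of Lemma~\ref{initial_partition}, uniformity of the limit $\alpha$ relative to $K_\alpha$ follows from the inductive $(2n-1)$-block control together with $d((\alpha_n)_{-k+1}^{k-1},(\alpha)_{-k+1}^{k-1})\to 0$, and the relation $\alpha\succcurlyeq\beta$ is automatic from $K_{\alpha_n}=K_\beta$ for every $n$ and the closedness of the refinement relation under $d$-limits of partitions of constant cardinality.
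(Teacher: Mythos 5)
Your overall strategy—mimic the inductive scheme of Lemma~\ref{initial_partition} but modify the bad-column correction so that $\alpha_n \succcurlyeq \beta$ persists and $K_{\alpha_n}$ stays equal to $K_\beta$—is the right one and is what the paper does. But your specific correction mechanism has a genuine gap. You partition the base of each column with $\beta$-name $v$ into pieces of measures $\nu(B(C))\,p^*(w\mid v)$, assign the $\alpha_{n-1}$-refinements $w$ accordingly, and then assert that ``after redistribution every fiber of every principal column exhibits the same (target) $(2n-1)$-block distribution.'' That assertion is false: the $(2n-1)$-block distribution on a fiber is the \emph{internal} empirical block frequency of the single word $w$ carried by that fiber, and redistributing base measure changes only what fraction of the base carries each name $w$, not the block content of $w$ itself. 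Since $p^*(\cdot\mid v)$ is just the measure-theoretic distribution, the ``bad'' names (those whose fibers fail the $(2n-1)$-block estimate, which by Proposition~\ref{tower_lemma2} carry a small but nonzero proportion of measure) are still assigned, in proportion $p^*(w\mid v)$, inside every column; bad fibers persist. Worse, once you have performed an arbitrary measurable repartition of the bases, the set of levels carrying bad names is no longer $\beta$-measurable, so it need not be a uniform set.

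That last point is precisely where the paper's proof does essential work that your plan omits. The paper arranges all the towers $\{{\mathfrak t}_k(n)\}$ to be measurable with respect to the $\sigma$-algebra generated by $\beta$ and independent of $n$. Then, for a column that contains at least one good fiber, the good $\alpha_{n-1}$-name is \emph{copied} onto all of its fibers (the cost is $\le\delta_n$ because, by Proposition~\ref{tower_lemma2}, the sub-columns carrying bad names have total measure $<\delta_n$, and $K_{\alpha_0}=K_\beta$ guarantees no mass moves into or out of $K_\beta$). For a column with \emph{no} good fiber, the names are left untouched; instead, the union $R$ of the $K$-levels of these no-good-fiber columns is $\beta$-measurable, hence \emph{uniform} relative to $K$ by the hypothesis on $\beta$, so the empirical frequency of $R$ along any long orbit section is controllably small. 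That uniformity of $R$ is exactly what absorbs the error from the uncorrected columns in the explicit Step~1 estimate in the paper. Your redistribution gives you neither all-good fibers nor a $\beta$-measurable (hence uniform) bad set, so the uniformity of the limit $\alpha$ does not follow from your argument as written. To repair it you would either have to assign only \emph{good} names in the redistribution (and then re-justify the $d$-cost estimate and the $\succcurlyeq\beta$ relation), or show that the bad pieces you create are covered by a uniform set—which is essentially what the paper's $\beta$-measurable tower construction accomplishes by design.
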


\begin{proof}
By constructing inductive steps in a similar way to the proof of 
Lemma~\ref{initial_partition}, we shall obtain a sequence 
$\{\alpha_n\}_{n \in \N}$ of finite partitions whose limit $\alpha$ satisfies 
the desired properties. However, we have to adjust some aspects of the proof. 
Without putting the name $1$ on any level in any bad column, without 
putting together the bad columns with an infinite level, we will 
copy the name of a good fiber on bad fibers. Towers $\{{\mathfrak t}_k(n)|
1 \le k \le n, n \in \N \}$ are built 
so as to be measurable with respect to the $\sigma$\nobreakdash-algebra $\B$ 
generated by $\bigcup_{n \in \N} (\beta)_{-n}^n$, and so that 
${\mathfrak t}_k(n)$ does not depend on $n$. Also, for any $n \in \N$, any 
change of $\alpha_n$\nobreakdash-names are made among names which 
atoms associated with are included in a unique atom of $\beta$.

{\bf Step~1.} Put $K=K_\beta$. 
Take $0 < \delta_1 < \frac{1}{9}\epsilon \min\{1,\nu(K)\}$ so that 
if $d(\alpha_0,\beta) < \delta_1$ then the $\beta$ distribution is within 
$\frac{\epsilon}{9}$ of the $\alpha_0$ distribution. There exists $N_1 \in \N$ such 
that if the height of each principal column of a $K$\nobreakdash-standard tower 
is at least $N_1$ then those fibers of the tower on which $\alpha_0$ 
$1$\nobreakdash-block distributions are within $\delta_1$ of the $\alpha_0$ 
distribution cover at least $\nu(K)-\delta_1$ of $K$. 
Since the set $C$ in the proof of Lemma~\ref{tower_lemma1} 
may be chosen from $\B$, there exists a $\B$\nobreakdash-measurable, 
$K$\nobreakdash-standard tower ${\mathfrak t}_1(1)$ such that the height of 
each principal column is $N_1$ or $N_1+1$. Refine ${\mathfrak t}_1(1)$ according 
to $\beta$, so that all fibers on a fixed column have a $\beta$\nobreakdash-name 
in common.

If a fiber of a principal column ${\mathfrak c}$ of ${\mathfrak t}_1(1)$ 
has a good $\alpha_0$\nobreakdash-name, then copy the good 
$\alpha_0$\nobreakdash-name on any other fibers of ${\mathfrak c}$. 
This change of name makes $\alpha_0$ change at most $\delta_1$ in 
$d$ because of the assumption $K_{\alpha_0}=K_\beta$. 
If no fibers of the principal column ${\mathfrak c}$ have good 
$\alpha_0$\nobreakdash-names, then we do not change the 
$\alpha_0$\nobreakdash-name of any fiber of ${\mathfrak c}$. Instead, we 
will make use of the uniformity of the union $R({\mathfrak c})$ of those 
levels in ${\mathfrak c}$ which are included in $K$. Let $R$ denote the 
union of all such unions $R({\mathfrak c})$. 

The change of $\alpha_0$\nobreakdash-names, described in the preceding paragraph, 
yields a new finite partition $\alpha_1$. It follows from the way of changing 
names that $d(\alpha_0,\alpha_1) < \delta_1$ and 
$\alpha_1 \succcurlyeq \beta$. If a fiber of a principal column of 
${\mathfrak t}_1(1)$ is disjoint from $R$, then on the fiber the 
$\alpha_1$ $1$\nobreakdash-block distribution is within 
$\frac{\epsilon}{9}$ of the $\alpha_0$ distribution. This allows 
us to find $M_1 \in \N$ such that for any set 
$A \in \alpha_1$ of finite measure and for a.e.\ $y \in K$,
\begin{equation*}\label{0to1}
T_m{\mathbf 1}_{K \setminus R}(y) \ge M_1 \Rightarrow 
\left| \frac{T_m{\mathbf 1}_{A \setminus R}(y)}
{T_m{\mathbf 1}_{K \setminus R}(y)} - 
\frac{\nu(A^\prime)}{\nu(K)}\right| < \frac{\epsilon}{9},
\end{equation*}
where $A^\prime$ is an atom of $\alpha_0$ having the same index as $A$. 
Since $R$ is uniform relative to $K$, there exists $\hat{N}_1 \in \N$ with 
$(1-\frac{\epsilon}{9})\hat{N}_1 \ge M_1$ such that for a.e.\ $y \in K$, 
\begin{equation*}\label{less_freq_1}
T_m{\mathbf 1}_K(y) \ge \hat{N}_1 \Rightarrow 
\frac{T_m{\mathbf 1}_R(y)}{T_m{\mathbf 1}_K(y)} < \frac{\epsilon}{9}.
\end{equation*}
These properties together with the fact that 
$d(\alpha_0,\alpha_1) < \delta_1$ implies that for any set $A \in \alpha_1$ of 
finite measure and for a.e.\ $y \in K$, if 
$T_m{\mathbf 1}_K(y) \ge \hat{N}_1$ then 
\begin{align*}
\left| \frac{T_m{\mathbf 1}_A(y)}
{T_m{\mathbf 1}_K(y)} - \frac{\nu(A)}{\nu(K)}\right| & \le 
\left| \frac{T_m{\mathbf 1}_{A \setminus R}(y)}{T_m{\mathbf 1}_K(y)} - 
\frac{\nu(A^\prime)}{\nu(K)}\right| + \frac{T_m{\mathbf 1}_R(y)}
{T_m{\mathbf 1}_K(y)} + \left| \frac{\nu(A^\prime)}{\nu(K)} - 
\frac{\nu(A)}{\nu(K)}\right| \\
& < \left| \frac{T_m{\mathbf 1}_{A \setminus R}(y)}{T_m{\mathbf 1}_{K \setminus 
R}(y)} \left( 1 - \frac{T_m{\mathbf 1}_R(y)}{T_m{\mathbf 1}_K(y)} \right) - 
\frac{\nu(A^\prime)}{\nu(K)} \right|  + \frac{\epsilon}{9} + \delta_1 \\
& \le \left| \frac{T_m{\mathbf 1}_{A \setminus R}(y)}{T_m{\mathbf 1}_{K 
\setminus R}(y)} - \frac{\nu(A^\prime)}{\nu(K)}\right| + \frac{\epsilon}{9} 
+ \frac{2}{9}\epsilon < \frac{\epsilon}{2}.
\end{align*}
The author would like to leave the remainder of the inductive steps to the reader. 
\end{proof}

\begin{theorem}\label{J-K}
Let $(Y,\C,\nu,T)$ be an ergodic, infinite measure\nobreakdash-preserving system. 
Then, there exists a locally compact Cantor minimal system $(X,S)$ admitting a 
unique, up to scaling, invariant Radon measure $\mu$ such that $(Y,\C,\nu,T)$ is 
isomorphic to $(X,\B_\mu,\mu,S)$. 
\end{theorem}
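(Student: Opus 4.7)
The plan is to follow the Weiss strategy, as indicated in the introduction: build a refining sequence of uniform partitions whose associated symbolic factors assemble via inverse limit into the desired locally compact Cantor minimal system.

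First I would fix a countable family $\{E_n\}_{n\in\N} \subset \C$ which generates $\C$ modulo $\nu$-null sets; such a family exists because $(Y,\C,\nu)$ is a non-atomic infinite Lebesgue space, hence separable. By splitting each $E_n$ into a piece of finite measure and its complement inside a suitable exhaustion of $Y$ by finite measure sets, I may assume each $E_n$ has finite measure. Next I apply Lemma~\ref{initial_partition} to the partition $\alpha_0^{(1)} := \{Y \setminus E_1, E_1\}$ with a small error $\epsilon_1$, obtaining a finite partition $\alpha_1$ that is uniform relative to $K_{\alpha_1}$ and satisfies $\sharp(\Orb_{S_{\alpha_1}}(x) \cap K_{\phi_{\alpha_1}(\alpha_1)}) = \infty$ for every $x \in X_{\alpha_1}$.

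The inductive step is the heart of the argument. Suppose $\alpha_n$ is uniform relative to $K_{\alpha_n}$ with the same infinite-orbit property. Consider the partition $\alpha_n \vee \{E_{n+1}, Y \setminus E_{n+1}\}$ and split atoms so as to obtain $\alpha_0^{(n+1)} \succcurlyeq \alpha_n$ which $\epsilon_{n+1}/2$-approximates a partition generating $E_{n+1}$ modulo $K_{\alpha_n}$. Applying Lemma~\ref{refining_uniform} with $\beta = \alpha_n$ and the partition $\alpha_0^{(n+1)}$ yields $\alpha_{n+1} \succcurlyeq \alpha_n$ which is uniform relative to $K_{\alpha_{n+1}} = K_{\alpha_n}$ and satisfies $d(\alpha_0^{(n+1)},\alpha_{n+1}) < \epsilon_{n+1}$; choosing $\sum_n \epsilon_n < \infty$ and fine enough approximations forces $\bigvee_n \alpha_n$ to generate $\C$ modulo null sets. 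By Lemma~\ref{symbolic_unique} each $(X_{\alpha_n}, S_{\alpha_n})$ is a locally compact Cantor minimal system with a unique, up to scaling, invariant Radon measure $\mu_{\alpha_n}$.

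Because the partitions refine each other and agree on $K_{\alpha_n}$, the factor maps $\phi_{\alpha_n, \alpha_{n+1}} : \hat X_{\alpha_{n+1}} \to \hat X_{\alpha_n}$ defined in Section~2 assemble into an inverse system of almost minimal Cantor systems whose invariant measures $\hat\mu_{\alpha_n}$ are compatible. The inverse limit $(\hat X,\hat S)$ carries the measure $\hat\mu$ produced by Kolmogorov's extension theorem \cite{Okabe}, and Lemma~\ref{unique_Radon} gives that the restriction $(X,S,\mu)$ is a locally compact Cantor minimal system with a unique up-to-scaling invariant Radon measure. The map $\phi:Y \to \hat X$ defined by $\phi(y) = (\phi_{\alpha_n}(y))_n$ is measurable, equivariant, and satisfies $\hat\mu = \nu \circ \phi^{-1}$ by uniqueness in the Kolmogorov extension applied to $\nu \circ \phi^{-1}$; since $\bigvee_n \phi_{\alpha_n}^{-1}(\hat\B_{\alpha_n})$ generates $\C$ modulo null sets, $\phi$ is injective $\nu$-a.e., hence an isomorphism onto $(X,\B_\mu,\mu,S)$ after discarding the null orbit of the fixed point $z$.

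The main obstacle is executing the inductive step so that three competing requirements are met simultaneously: the new partition $\alpha_{n+1}$ must (i) refine $\alpha_n$ with $K_{\alpha_{n+1}} = K_{\alpha_n}$ so that the factor maps of the inverse system are well defined and $K$-sets are preserved, (ii) approximate the added generator $E_{n+1}$ closely enough that the joined $\sigma$-algebra eventually separates points of $Y$, and (iii) remain uniform, which is the content of Lemma~\ref{refining_uniform}. The delicate bookkeeping lies in choosing $\alpha_0^{(n+1)} \succcurlyeq \alpha_n$ that simultaneously approximates $E_{n+1}$ and preserves $K_{\alpha_n}$; this is possible because $E_{n+1} \cap (Y \setminus K_{\alpha_n})$ has finite measure and can be absorbed by a small perturbation inside $K_{\alpha_n}$ without altering the infinite atom, after which Lemma~\ref{refining_uniform} supplies the uniformity.
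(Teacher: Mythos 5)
Your proposal follows essentially the same route as the paper: Lemma~\ref{initial_partition} to seed a first uniform partition, Lemma~\ref{refining_uniform} applied inductively to produce a refining sequence $\alpha_1 \preccurlyeq \alpha_2 \preccurlyeq \cdots$ of uniform partitions generating $\C$, Lemma~\ref{symbolic_unique} to identify each $(X_{\alpha_n},S_{\alpha_n})$ as a locally compact Cantor minimal system with a unique invariant Radon measure, the inverse limit and Kolmogorov extension from Section~\ref{sec_top}, and Lemma~\ref{unique_Radon} to conclude, with the map $\phi = (\phi_{\alpha_n})_n$ supplying the isomorphism. The only addition relative to the paper's terse proof is that you spell out how interleaving a countable generating family $\{E_n\}$ of finite-measure sets (with geometrically summable errors $\epsilon_n$) forces the joint symbolic $\sigma$-algebra to exhaust $\C$, a detail the paper leaves implicit in condition~\eqref{generates}.
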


\begin{proof}

Lemma~\ref{initial_partition} allows us to find a finite partition $\alpha_1$ 
of $Y$, which is uniform relative to $K_{\alpha_1}$, such that 
$\sharp(\Orb_{S_{\alpha_1}}(x) \cap K_{\phi_{\alpha_1}(\alpha_1)}) = \infty$ 
for all $x \in X_{\alpha_1}$. Using Lemma~\ref{refining_uniform} inductively, 
find a sequence $\{\alpha_n\}_{n \ge 2}$ of finite partitions such that 
\begin{enumerate}
\item
$\alpha_m \succcurlyeq \alpha_n$ for all integers $m,n$ with $m \ge n \ge 1$;
\item
$\alpha_n$ is uniform relative to $K_{\alpha_1}$ for all integers $n \ge 2$;
\item\label{generates}
$\bigcup_{n=1}\bigvee_{k=1}^\infty (\alpha_n)_{-k}^k$ generates the 
$\sigma$\nobreakdash-algebra $\C$.
\end{enumerate}
Let $X_n$ be the complement of a unique fixed point in 
$\hat{X}_n:=\hat{X}_{\alpha_n}$. Let $S_n$ and $\mu_n$ denote the 
restrictions to $X_n$ of $\hat{S}_n:=\hat{S}_{\alpha_n}$ and 
$\hat{\mu}_n:=\hat{\mu}_{\alpha_n}$, respectively. It follows from 
Lemma~\ref{symbolic_unique} that $\mu_n$ is a unique, up to scaling, 
invariant Radon measure of a locally compact Cantor minimal system 
$(X_n,S_n)$. Let $(\hat{X},\hat{S})$ denote the inverse limit of an 
inverse system $(\hat{X}_n,\hat{S}_n,\phi_{n,m})$, where 
$\phi_{n,m}=\phi_{\alpha_n,\alpha_m}$, and $(X,S)$ the restriction of 
$(\hat{X},\hat{S})$ to the complement of a unique fixed point. 
Let $\hat{\mu}$ denote an $\hat{S}$\nobreakdash-invariant measure constructed 
from $\{\hat{\mu}_n\}_n$ as in Section~\ref{sec_top}, and 
$\mu$ the restriction of $\hat{\mu}$ to $X$. In view of Lemma~\ref{unique_Radon}, 
$(X,S)$ is a locally compact Cantor minimal system admitting a unique, up to 
scaling, invariant Radon measure $\mu$. 
Define a map $\phi:(Y,\nu,\C) \to (\hat{X},\hat{\B}_{\hat{\mu}},\hat{\mu})$ by 
$y \mapsto (\phi_n(y))_n$, where $\phi_n=\phi_{\alpha_n}$. 
Let $p_n:\hat{X} \to \hat{X}_n$ be the projection. Since 
$p_n \circ \phi = \phi_n$ for every $n \in \N$, $\phi$ is measurable. 
We also have $(\nu \circ \phi^{-1}) \circ p_n^{-1} = \nu \circ 
{\phi_n}^{-1} = \hat{\mu}_n$ for every $n \in \N$, so that 
$\nu \circ \phi^{-1} = \hat{\mu}$. In view of \eqref{generates}, $\phi$ is 
injective. Since a measurable subset $\{(\phi_n(y))_n|y \in Y\}$ of 
$\hat{X}$ has full measure, $\phi$ is surjective. It is readily verified that 
$\phi \circ T = \hat{S} \circ \phi$. This completes the proof. 
\end{proof}

\bibliographystyle{amsplain}
\bibliography{Mybib}

\end{document}